\documentclass[letterpaper, 10 pt, conference]{ieeeconf}
\IEEEoverridecommandlockouts
\overrideIEEEmargins

\usepackage[table,usenames,dvipsnames]{xcolor}      
\usepackage[noadjust]{cite}
\usepackage{amsmath,amssymb,amsfonts,amsthm,dsfont,mathtools}
\usepackage{nicematrix}
\usepackage{epstopdf}
\usepackage{adjustbox}
\usepackage{graphicx,tabularx,adjustbox,color}
\usepackage{multirow}
\usepackage[font=footnotesize]{caption}
\usepackage[font=footnotesize]{subcaption}

\usepackage[utf8]{inputenc}

\usepackage{algorithmic}
\usepackage[ruled,vlined]{algorithm2e}
\usepackage{stackengine}

\usepackage{dirtytalk}
\allowdisplaybreaks


\usepackage[breaklinks=true, colorlinks, bookmarks=true, citecolor=Black, urlcolor=Violet,linkcolor=Black]{hyperref}




\newtheorem{lemma}{Lemma}
\newtheorem{theorem}{Theorem}
\theoremstyle{definition}
\newtheorem{definition}{Definition}
\newtheorem*{problem*}{Problem}

\newtheorem*{remark*}{Remark}

\newcommand{\ba}{\begin{align}}
\newcommand{\ea}{\end{align}}
\newcommand{\fr}{\frac}



\title{\LARGE \bf Event-Triggered Control of Neuron Growth with Actuation at Soma} 


\author{
Cenk Demir$^1$\thanks{$^1$Department of Mechanical and Aerospace Engineering, UC San Diego, 9500 Gilman Drive, La Jolla, CA, 92093-0411, {\tt\small  cdemir@ucsd.edu, krstic@ucsd.edu}}, 
Shumon Koga$^2$, 
and Miroslav Krstic$^1$ \thanks{$^2$Department of Electrical and Computer Engineering, UC San Diego, 9500 Gilman Drive, La Jolla, CA, 92093-0411, {\tt\small skoga@ucsd.edu}}}

\begin{document}
\maketitle
\begin{abstract}
We introduce a dynamic event-triggering mechanism for regulating the axonal growth of a neuron. We apply boundary actuation at the soma (the part of a neuron that contains the nucleus) and regulate the dynamics of tubulin concentration and axon length. The control law is formulated by applying a Zero-Order Hold (ZOH) to a continuous-time controller which guides the axon to reach the desired length. The proposed dynamic event-triggering mechanism determines the specific time instants at which control inputs are sampled from the continuous-time control law. We establish the existence of a minimum dwell-time between two triggering times that ensures avoidance of Zeno behavior. Through employing the Lyapunov analysis with PDE backstepping, we prove the local stability of the closed-loop system in $L_2$-norm, initially for the target system, and subsequently for the original system. The effectiveness of the proposed method is showcased through numerical simulations.
\end{abstract}

\section{Introduction}

Recent advancements in neuroscience have been achieved from diverse perspectives such as mathematical analysis, physics modeling, and engineering \cite{kandel2000principles,izhikevich2007dynamical}, crucial for understanding neuronal structure and function, and addressing neurological issues. One major challenge in this context is the growth of axons, which are similar to wires and are constructed through the assembly of tubulin proteins. Axons serve as connectors between neurons for transmitting electrical signals. Some neurological diseases, such as Alzheimer's disease \cite{maccioni2001molecular} and spinal cord injuries \cite{liu1997neuronal}, can damage axons by impeding the assembly process of tubulin proteins, leading to halted growth or degeneration. Researchers are developing new therapies to treat these diseases. One promising therapy is called ChABC which involves injecting a bacterial enzyme that digests the axon growth inhibitors \cite{bradbury2011manipulating}. Following this therapy, axon growth can be sustained \cite{karimi2010synergistic}. However, ChABC requires repeated injections due to its rapid inactivation at $37$$^\circ$C \cite{lee2010sustained}. To enhance the effectiveness of this therapy, the amount of enzymes required to achieve the desired axon length and the intervals for these repeated injections must be identified. 


Studying the behavior of tubulin proteins can help achieve the desired axon length. Numerous mathematical models have been proposed for representing the axon growth by Ordinary Differential Equations (ODEs) and Partial Differential Equations (PDEs) to clarify tubulin behavior \cite{mclean2004continuum}. Authors of \cite{diehl2014one} model the axon growth process as a coupled PDE-ODE with a moving boundary, akin to the Stefan problem, effectively describing the associated physical phenomena. In this model, the PDE represents tubulin concentration's evolution along the axon, and the ODEs describe both the evolution of axon length and tubulin concentration at the growth cone.  Given that this model captures this critical information about axon growth, it is worthwhile to consider designing a controller to regulate tubulin concentration and axon length. Over the past two decades, boundary controls of PDE systems have been developed by PDE backstepping, to regulate PDEs by defining control laws at their boundaries \cite{krstic2008boundary}. Following the development of this technique, boundary control was expanded to the class of coupled PDE-ODE systems \cite{krstic09,susto2010control, tang2011state}. While the majority of these contributions are typically assumed to have a constant domain size over time, some recent works focused on a parabolic PDE with a moving domain over time, such as the Stefan problem, which owns a nonlinearity in the moving boundary dynamics. Boundary control for the Stefan problem was achieved in \cite{krstic2020materials}  by backstepping design with global stability results by using maximum principle for parabolic PDE. White, for nonlinear hyperbolic PDEs, several works have proposed local stability results, e.g. \cite{coron2013local}. Our previous works achieved local stability results for nonlinear parabolic PDEs with a moving boundary of the axon growth \cite{demir2021neuroncontrol,demir2022neuron}, and with input delay in \cite{demir2022input}.

While the aforementioned control designs operate in continuous time, certain technologies require control actions only when necessary due to energy, communication, and computation constraints \cite{heemels2012introduction}. To address this, an event-triggered control strategy is proposed for PID controllers in \cite{aaarzen1999simple}, and for state feedback and output feedback controllers for linear and nonlinear time-invariant systems in \cite{heemels2008analysis} and \cite{kofman2006level}. Authors of \cite{postoyan2011unifying}  ensured asymptotic stability for a closed-loop system with state feedback control laws by employing an event-triggering mechanism, characterizing it as a hybrid system. This characterization eased the constraints associated with the event-triggering mechanism and this relaxation is detailed in \cite{girard2014dynamic} as a dynamic triggering approach. In addition, the authors of \cite{espitia2016event} successfully applied an event-triggered mechanism to boundary control hyperbolic PDE systems. This approach led to the use of event-triggered boundary control in reaction-diffusion PDEs, as demonstrated in \cite{espitia2021event}. For Stefan problem, both static and dynamic event-triggered boundary control laws were developed by the authors of \cite{rathnayake2022event2} and \cite{rathnayake2022event}. Furthermore, an event-triggering mechanism was employed to transition between safety utilizing CBFs and stability for Stefan problem with actuator dynamics as discussed in \cite{koga2023event}. In this paper, we introduce a novel dynamic event-triggering mechanism for the axon growth problem which consists of a coupled reaction-diffusion-advection PDE and nonlinear ODEs with a moving boundary. With this dynamic event-triggering mechanism, we aim to address the key question around appropriate time intervals for administering therapy.

The contributions of this paper are (i) designing a control law for neuron growth with actuation at the soma, (ii) developing a dynamic event-triggering mechanism for coupled reaction-diffusion-advection PDEs and nonlinear ODEs with a moving boundary, (iii) analyzing Zeno behavior avoidance, and (iv) demonstrating local stability for the closed-loop system. This work pioneers event-triggering boundary control for axon growth and marks the first local stability analysis using event-triggering mechanisms for PDE systems.


\begin{figure}[t!]
\centering
\includegraphics[width=0.4\linewidth]{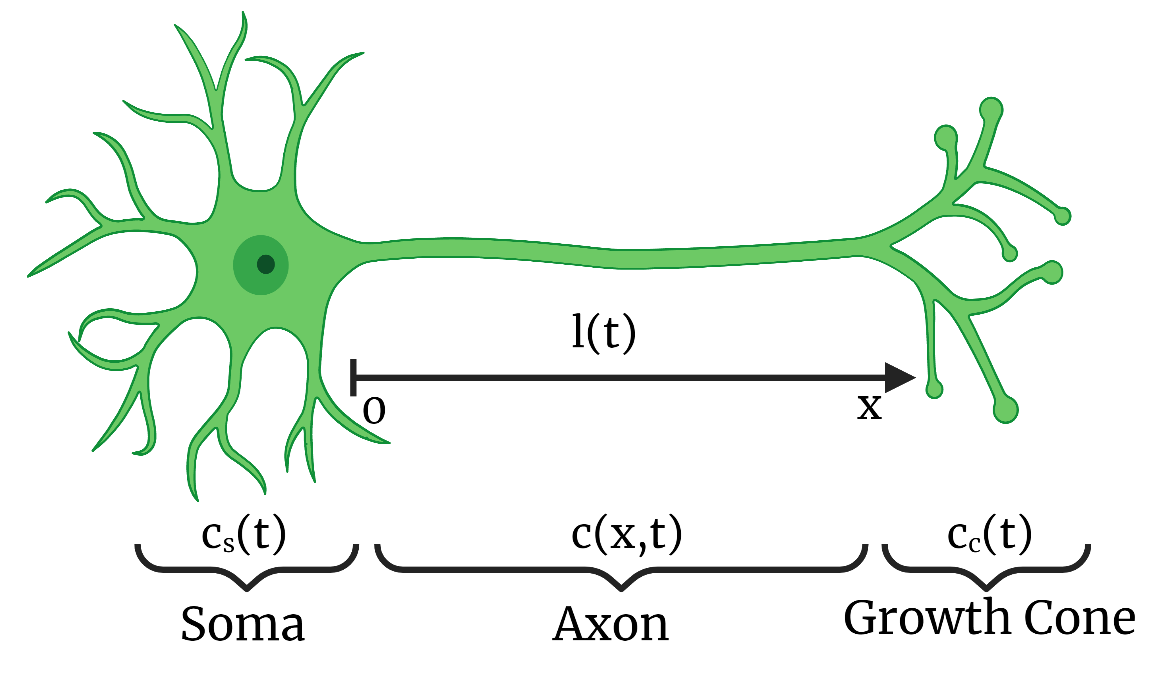}
  \caption{Schematic of neuron and state variables }
  \label{fig:1} 
\end{figure}

\vspace{-0.5em}

\section{Modeling of Axon Growth} \label{sec:model} 
In this section, we introduce a mathematical model for axon growth and provide the steady-state solution for target axon length and a reference error system.

\vspace{-0.5em}

\subsection{Axon growth model by a moving boundary PDE} 
The evolution of tubulin along the axon serves as the primary catalyst for the axon growth process, and to understand this process, we rely on two assumptions to create a mathematical model which are described in our previous work \cite{demir2021neuroncontrol}. Thus, the axonal growth can be modeled as
{
\setlength{\abovedisplayskip}{1pt}
\setlength{\belowdisplayskip}{1pt}
\begin{align}\label{sys1} 
c_t(x,t) =& D c_{xx} (x,t) - a c_x (x,t) - g c(x,t) , \\
\label{sys2} c_x(0,t)+ c(0,t) = &- q_{\rm s}(t), \\
\label{sys3} c(l(t),t) =& c_{\rm c} (t), \\
\label{sys4} l_{\rm c} \dot{c}_{\rm c}(t) = & (a-gl_{\rm c}) c_{\rm c}(t) - D c_x(l(t), t) \notag\\
& - (r_{\rm g} c_{\rm c}(t) + \tilde{r}_{\rm g} l_{\rm c} )(c_{\rm c}(t) - c_{\infty}), \\
\label{sys5} \dot{l}(t) =& r_{\rm g} (c_c(t)-c_{\infty}), 
\end{align}}
In this model, the PDE state $c(x,t)$ represents axonal tubulin concentration. ODE states include $c_{\rm c}(t)$ for tubulin concentration in the growth cone, $l(t)$ for axon length, and $q_{\rm s}(t)$ for tubulin concentration and flux in the soma. Tubulin proteins move along the axon at rate $a$, degrade at rate $g$, and axonal growth halts at equilibrium concentration $c_{\infty}$. The diffusivity constant in \eqref{sys1} is denoted by $D$. Other parameters are detailed in our prior works \cite{demir2021neuroncontrol, demir2022neuron}.

\vspace{-0.5em}

\subsection{Steady-state solution} 
For a desired axon length, $l_s$, we first derive a steady-state solution of the concentration. The steady-state solution of \eqref{sys1}-\eqref{sys5} is obtained as follows
{
\setlength{\abovedisplayskip}{1pt}
\setlength{\belowdisplayskip}{1pt}
\begin{align}
 \label{ceq} 
c_{\rm eq}(x) = c_{\infty} \left( K_{+} e^{\lambda_+ (x - l_{\rm s})} + K_- e^{\lambda_{-} (x - l_{\rm s}) } \right),
\end{align}}
where
{
\setlength{\abovedisplayskip}{1pt}
\setlength{\belowdisplayskip}{1pt}
\begin{small}
\begin{align}
    \lambda_{\pm} =& \frac{a}{2D} \pm \frac{\sqrt{a^2 + 4 D g}}{2 D}, \  K_{\pm} =  \frac{1}{2} \pm  \frac{a  - 2 g l_{\rm c} }{2 \sqrt{a^2 + 4 D g}}.
\end{align}
\end{small}}
The steady-state input for the concentration  in the soma is
{
\setlength{\abovedisplayskip}{1pt}
\setlength{\belowdisplayskip}{1pt}
\begin{small}
\begin{align}
    q_{\rm s}^* = - c_{\infty} \left( K_{+}(1+\lambda_+)  e^{ - \lambda_+ l_{\rm s}} + K_-(1+\lambda_-)  e^{ - \lambda_{-} l_{\rm s} } \right). \label{eqn:state-qs}
\end{align}
\end{small}}

\vspace{-0.5em}

\subsection{Reference error system} 
Let us consider the following reference error states
{
\setlength{\abovedisplayskip}{1pt}
\setlength{\belowdisplayskip}{1pt}
\begin{small}
\begin{align}
u(x,t) = c(x,t) &- c_{\rm eq}(x), \label{eqn:error-usys}\\
z_{1}(t) = c_{\rm c}(t) - c_{\infty}&, \quad
z_2(t) = l(t) - l_{\rm s}, \\
U(t) =  - ( q_{\rm s}&(t) - q_{\rm s}^*). \label{eqn:error-U}
\end{align}
\end{small}}

\noindent where $U(t)$ is the reference error input. Utilizing \eqref{eqn:error-usys}-\eqref{eqn:error-U}, \eqref{ceq} and \eqref{eqn:state-qs} in the governing equations \eqref{sys1}-\eqref{sys5}, we derive the reference error system as
\begin{small}
\begin{align}
&u_t(x,t) = D u_{xx}(x,t) - a u_x(x,t) - g u(x,t) , \label{eqn:u-nonlin}\\
&u_x(0,t)+u (0,t) =  U(t), \label{eqn:non-U}\\
&u(l(t),t) =h(X(t)) , \\
&\dot{X}(t) =  A X(t) + f(X(t)) + B u_x(l(t), t),  \label{eqn:X-nonlin} 
\end{align}
\end{small}
where the state vector, $X(t)\in \mathbb{R}^2$  as $X(t)=[ z_1(t) \quad z_2(t)]^\top$ and constants in \eqref{eqn:u-nonlin}-\eqref{eqn:X-nonlin} are
\begin{small}
\begin{align} 
\label{AB-def} 
 & A = \left[ 
 \begin{array}{cc}
 \tilde a_1 & -\beta \tilde{a}_2 \\
 r_{\rm g} & 0
 \end{array}  
 \right] , ~ B =  \left[ 
 \begin{array}{c}
 - \beta \\
 0
 \end{array}  
 \right], ~ \kappa = \frac{r_{\rm g}}{l_{\rm c}}, ~ \beta =   \frac{D}{l_{\rm c}}, \\
 &f(X(t)) =  - \kappa z_1(t)^2+\beta\tilde{a}_2z_2(t)+\beta  c_{\infty}\frac{a-gl_{\rm c}}{D} \nonumber \\
 &\quad \quad \quad -\beta c_{\infty}\left(  K_{+}\lambda_+ e^{\lambda_+ z_2(t)} + K_- \lambda_{-}e^{\lambda_{-} z_2(t)} \right) \\
 &h(X(t)) =  z_1(t) + c_{\infty}\left(1 - K_{+} e^{\lambda_+ z_2(t)} - K_- e^{\lambda_{-} z_2(t) }\right) , \\
&\tilde a_1 = \frac{a -  r_{\rm g} c_{\infty}}{l_{\rm c}} - g - \tilde{r}_{\rm g} , \quad \tilde{a}_2 =c_{\infty}\left(\lambda_+^2K_++\lambda_-^2K_-\right).
\end{align} 
\end{small}

\vspace{-0.5em}

\section{Continous-time and Sample-based Control Design} \label{sec:control} 
We linearize nonlinear ODEs in \eqref{eqn:X-nonlin} around zero states as

{
\setlength{\abovedisplayskip}{1pt}
\setlength{\belowdisplayskip}{1pt}
\begin{align}
    \label{ulin-PDE}
u_t(x,t) =& D u_{xx}(x,t) - a u(x,t) - g u(x,t) , \\
u_x(0,t)+u (0,t) = & U(t), \label{ulin-BC1} \\
\label{linreferr3}u(l(t),t) =&H^\top X(t)  , \\
\dot{X}(t) = & A_1  X(t) + B u_x (l(t), t), \label{ulin-ODE}
\end{align}}

\noindent where the vector $H \in \mathbb{R}^2$ is defined as
\begin{small}
\begin{align}
  A_1 &= \left[ 
 \begin{array}{cc}
 \tilde a_1 & \tilde a_3 \\
 r_{\rm g} & 0
 \end{array}  
 \right], \color{black}
 ~ H = \left[1 \quad - \frac{(a-gl_{\rm c}) c_{\infty}}{D}\right]^\top ,  \label{C-def}  
\end{align}
\end{small}
where $\tilde{a}_3=\frac{a^2+Dg-agl_{\rm c}}{D^2}$. Our continuous-time control design, detailed in \cite{demir2021neuroncontrol}, employs a backstepping transformation, mapping the linear reference error system $(u,X)$ to the target system $(w,X)$ using the following transformations.
{
\setlength{\abovedisplayskip}{1pt}
\setlength{\belowdisplayskip}{1pt}
\begin{small}
\begin{align}
 \label{bkst}
w(x,t) = & u(x,t) - \int_x^{l(t)} k(x,y) u(y,t) dy  - \phi(x - l(t))^\top X(t), \\
u(x,t)=&w(x,t)+\int_{x}^{l(t)}q(x,y)w(y,t)dy+\varphi(x-l(t))^\top X(t), \nonumber
\end{align} 
\end{small}}

\noindent where $k(.,.)\in \mathbb{R}$, $~q(.,.)\in \mathbb{R} \in \mathbb{R} $, $\phi(.) \in \mathbb{R}^2$ and $\varphi(.) \in \mathbb{R}^2$ are the gain kernel functions are explicitly described in \cite{demir2021neuroncontrol}. The corresponding target system is  
\begin{small}
\begin{align} 
\label{tar-PDE} &w_t(x,t) = D w_{xx} (x,t) - a w_x(x,t) - g w(x,t)- \dot l(t) F(x,X(t)) , \\
\label{tar-BC1} &w_x(0,t)+w(0,t) =  -\frac{1}{D}\left(H-\epsilon\right)^\top Bu(0,t),\\
\label{tar-BC2}&w(l(t),t) =\epsilon^\top X(t) , \\
\label{tar-ODE} &\dot{X}(t) =  (A_1 + BK^\top) X(t) + B w_x(l(t), t),
 \end{align}
 \end{small}

\noindent where $\epsilon\in \mathbb{R}^2$ is chosen in the stability analysis and $K \in \mathbb{R}^2$ is chosen to ensure the stability of $A +BK$ such that it is Hurwitz, satisfying $k_1 > \frac{\tilde a_1}{\beta}$, $k_2 > \frac{\tilde{a}_3}{\beta}$. Furthermore, we describe the redundant nonlinear term $F(x,X(t)) \in \mathbb{R}$ in \eqref{tar-PDE}, arising from the moving boundary, as $F(x,X(t))= \left(\phi'(x-l(t))^T-k(x, l(t)) H^T \right) X(t)$.

\vspace{-0.5em}

 \subsection{Control law} 
The continuous-time control law is obtained based on the boundary condition \eqref{tar-BC1} of the target system at $x = 0$, utilizing the gain kernel solutions as detailed in \cite{demir2021neuroncontrol}.
{
\setlength{\abovedisplayskip}{1pt}
\setlength{\belowdisplayskip}{1pt}
\begin{align}
    \phi(x)^\top&=\begin{bmatrix}(H-\epsilon)^\top & K^\top-\frac{1}{D}H^\top BH^\top\end{bmatrix}e^{N_1x}\begin{bmatrix} I \\ 0
\end{bmatrix},
\label{phix} \\
    &k(x,y)=-\frac{1}{D}\phi(x-y)^\top B,
    \label{kstar}
\end{align}}
where $N_1$ is defined in equation (37) in \cite{demir2021neuroncontrol}.
Substituting $x = 0$ into the transformation \eqref{bkst} yields the control law
{
\setlength{\abovedisplayskip}{1pt}
\setlength{\belowdisplayskip}{1pt}
\begin{align}
  U(t)= -\frac{1}{D}\int_0^{l(t)}p(x)Bu(x,t)dx+p(l(t))X(t),
\label{real-input}
\end{align}}
where $p(x) = \phi'(-x)^\top+\phi(-x)^\top$.
It is worth noting that the solutions of the inverse gain kernels $q(x, y)$ and $\varphi(x)$ can be found in \cite{demir2021neuroncontrol}. This invertibility of the backstepping transformation is essential for demonstrating the stability of the $(u,X)$-system.

\vspace{-0.5em}

\subsection{Sample-based control law} 
We aim to stabilize the closed-loop system \eqref{sys1}-\eqref{sys5} using sampling for the controller defined in \eqref{real-input} with the increasing sequence $(t_j)_{j \in \mathbb{N}}$. The control input is then:
{
\setlength{\abovedisplayskip}{1pt}
\setlength{\belowdisplayskip}{1pt}
\begin{small}
 \begin{align}
     U(t_j)=&-\frac{1}{D}\int_{0}^{l(t_j)}p(x)Bu(x,t_j)dx+p(l(t_j))X(t_j)\label{eqn:dis-Utj}
 \end{align}
 \end{small}
}

\noindent which implies that the boundary condition \eqref{eqn:non-U} is modified 
and therefore, 
the reference error system is rewritten as
{
\setlength{\abovedisplayskip}{1pt}
\setlength{\belowdisplayskip}{1pt}
\begin{align}
   &u_t(x,t) = D u_{xx}(x,t) - a u_x (x,t) - g u(x,t) , \label{eqn:nonlin-u-dis}\\
&u_x(0,t)+u(0,t)=U(t_j), \\
&u(l(t),t) =h(X(t)) , \\
 &\dot{X}(t) =  A X(t) + f(X(t)) + B u_x(l(t) t). \label{eqn:nonlin-X-dis}  
\end{align}}
To establish stability results, we transform the reference error system in \eqref{eqn:nonlin-u-dis}-\eqref{eqn:nonlin-X-dis} to the target system using the transformation in \eqref{bkst}. Thus, the target system is
\begin{small}
\begin{align} 
\label{tar-PDE-nonlin}  &w_t (x,t) = D w_{xx} (x,t) - a w_x (x,t) - g w(x,t) - \dot l(t) F(x,X(t))\notag\\
& \quad \quad \quad -\phi(x-l(t))^\top f(X(t))-G(x,l(t))h^*(X), \\
\label{tar-BC1-nonlin} &w_x(0,t)+w(0,t) = d(t)-\frac{1}{D}\left(H-\epsilon\right)^\top Bu(0,t),\\
\label{tar-BC2-nonlin}&w(l(t),t) =h^*(X(t))+\epsilon^\top X(t) , \\
\label{tar-ODE-nonlin} &\dot{X}(t) =  (A + BK) X(t) +f(X(t)) + B w_x (l(t), t),  
 \end{align}
 \end{small}
 
\noindent where $h^*(X(t))=h(X(t)) -H^\top X(t)$ and the error between continuous-time control law in \eqref{real-input} and sample-based control law in \eqref{eqn:dis-Utj} is defined as
{
\setlength{\abovedisplayskip}{1pt}
\setlength{\belowdisplayskip}{1pt}
\begin{align}
    d(t)&=U(t)-U(t_j).
    \label{eqn:cont-error}
\end{align}}

\noindent and the nonlinear term is
$G(x,l(t)):=\left(\phi'(x-l(t))^\top +\frac{a}{D}\phi(x-l(t))^\top \right) B$. We also apply the following transformation
{
\setlength{\abovedisplayskip}{1pt}
\setlength{\belowdisplayskip}{1pt}
\begin{align*}
    \varpi(x,t)=w(x,t)-h^*(X(t))
\end{align*}}
so this transformation gives us
{
\setlength{\abovedisplayskip}{1pt}
\setlength{\belowdisplayskip}{1pt}
\begin{small}
\begin{align}
\label{eqn:varpi1}
    &\varpi_t(x,t)=D \varpi_{xx} (x,t) - a \varpi_x (x,t) - g \varpi(x,t) \nonumber \\
&\quad \quad  +gh^*(X(t)) - \dot l(t)F(x,X(t))-\dot{h}^*(X(t)) B \varpi_x (l(t), t)  \nonumber \\
&\quad \quad  -\phi(x-l(t))^\top f(X(t)) -G(x,l(t))h^*(X) \nonumber \\
&\quad \quad -\dot{h}^*(X(t))\left((A + BK) X(t) +f(X(t))\right), \\
&\varpi_x(0,t)+\varpi(0,t)=d(t)-\frac{1}{D}\left(H-\epsilon\right)^\top Bu(0,t)+h^*(X(t)), \\
&\varpi(l(t),t)=\epsilon^\top X(t), \\
&\dot{X}(t)=(A + BK) X(t) +f(X(t)) + B \varpi_x (l(t), t)
\label{eqn:varpiend}
\end{align}
\end{small}}

\vspace{-0.5em}

\section{Event-triggered based boundary control} \label{section:event}
In this section, we introduce the event-triggered state-feedback control approach, deriving sampling times for our control law to trigger events.

\vspace{-0.5em}

\begin{definition}
    The design parameters are $\gamma>0$, $\eta>0$, $\rho>0$ and $\beta_i>0$ where $i\in\{1,...5\}$. The event-based controller consists of two trigger mechanisms:
\begin{enumerate}
    \item The event-trigger: The set of all event times are in increasing sequence and they are denoted as $I=\{t_0,t_1,...\}$ where $t_0=0$ with the following rule
    \begin{itemize}
    \item If $S(t,t_{j})=\emptyset$, then the set of the times of the events is $\{t_0,...,t_j\}$.
    \item If $S(t,t_{j})\neq\emptyset$, the next event time is $t_{j+1}=\inf\left(S(t,t_{j})\right)$ where {
\setlength{\abovedisplayskip}{1pt}
\setlength{\belowdisplayskip}{1pt}\begin{small}\begin{align}
    S(t,t_{j})=\{t\in \mathbb{R}_+|t>t_j\wedge d^2(t)>-\gamma m(t)\} 
    \end{align}
    \end{small}}

\noindent for all $t\in[t_j,t_{j+1})$, $d(t)$ is given by \eqref{eqn:cont-error} and $m(t)$ satisfies the ODE
{
\setlength{\abovedisplayskip}{1pt}
\setlength{\belowdisplayskip}{1pt}
\begin{small}
\begin{align}
    &\dot{m}(t)=-\eta m(t)+\rho d(t)^2-\beta_1 X(t)^2-\beta_2X(t)^4\nonumber \\
    &-\beta_3X(t)^6-\beta_4|w(0,t)|^2-\beta_5 ||w(x,t))||^2 \label{eqn:m-def}
\end{align}
\end{small}}
        \end{itemize}
        \item The control action: The feedback control law that is derived in \eqref{eqn:dis-Utj} for all $t\in [t_j,t_{j+1})$ where $j\in \mathbb{N}$.
\end{enumerate}
\end{definition}

\vspace{-0.5em}

\begin{lemma}
    Under the definition of the state feedback event-triggered boundary control, it holds that 
    $d^2(t)\leq -\gamma m(t)$ and $m(t)>0$ for $t\in [0,F)$, where $F=\sup(I)$.
\end{lemma}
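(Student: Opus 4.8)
The plan is to prove the two assertions together, exploiting the reset property of $d$ at each event time and a continuity argument for the scalar ODE \eqref{eqn:m-def}; note that for the first bound to be non-trivial (since $d^{2}(t)\ge 0$ and $\gamma>0$) the second claim must be read as strict negativity of $m$, and I prove it in that form. First record the facts to be used: by \eqref{eqn:dis-Utj}--\eqref{eqn:cont-error}, $d(t_{j})=U(t_{j})-U(t_{j})=0$ at every event time, and on each $[t_{j},t_{j+1})$ the map $t\mapsto d(t)$ is continuous (it is the difference of the continuous-time law \eqref{real-input} evaluated along the solution and the frozen constant $U(t_{j})$); $m$ is $C^{1}$ on $[0,F)$ as the solution of the linear ODE \eqref{eqn:m-def} with piecewise-continuous forcing; $m(0)<0$ by the controller definition; and $[0,F)=\bigcup_{j}[t_{j},t_{j+1})$, so every $t\in[0,F)$ lies in some $[t_{k},t_{k+1})$. \emph{Step 1 (threshold bound on a subinterval).} Since $t_{j+1}=\inf S(t,t_{j})$, every $t\in(t_{j},t_{j+1})$ satisfies $t\notin S(t,t_{j})$, i.e.\ $d^{2}(t)\le-\gamma m(t)$; at the left endpoint $d^{2}(t_{j})=0\le-\gamma m(t_{j})$ as soon as $m(t_{j})\le 0$; and if $S(t,t_{j})=\emptyset$ the bound holds on all of $[t_{j},F)$. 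In particular, if $m\le 0$ on $[0,t^{\star}]$ for some $t^{\star}\in[0,F)$, then $d^{2}(t^{\star})\le-\gamma m(t^{\star})$, either from the open interval containing $t^{\star}$, or, when $t^{\star}$ is an event time, from $d(t^{\star})=0$.

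\emph{Step 2 (negativity of $m$, then conclusion).} Suppose $m$ is not negative throughout $[0,F)$ and let $t^{\star}:=\inf\{t\in[0,F):m(t)\ge 0\}$; by $m(0)<0$ and continuity, $t^{\star}\in(0,F)$, $m(t^{\star})=0$, and $m<0$ on $[0,t^{\star})$. By Step 1, $d^{2}(t^{\star})\le-\gamma m(t^{\star})=0$, so $d(t^{\star})=0$. Writing $\Sigma(t):=\beta_{1}|X(t)|^{2}+\beta_{2}|X(t)|^{4}+\beta_{3}|w_{x}(0,t)|^{2}+\beta_{4}\|w(\cdot,t)\|^{2}+\beta_{5}|w_{x}(l(t),t)|^{2}\ge 0$ and evaluating \eqref{eqn:m-def} at $t^{\star}$ gives $\dot m(t^{\star})=-\eta m(t^{\star})+\rho\,d(t^{\star})^{2}-\Sigma(t^{\star})=-\Sigma(t^{\star})\le 0$. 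But $m<0=m(t^{\star})$ on $[0,t^{\star})$ forces, since $m\in C^{1}$, $\dot m(t^{\star})\ge 0$; hence $\dot m(t^{\star})=0$ and $\Sigma(t^{\star})=0$, so $X(t^{\star})=0$ and $w(\cdot,t^{\star})\equiv 0$, which together with $d(t^{\star})=0$ puts the target system \eqref{tar-PDE-nonlin}--\eqref{tar-ODE-nonlin} exactly at its equilibrium at $t^{\star}$ — excluded by the standing assumption on the initial data (or else the statement is trivial). This contradiction yields $m(t)<0$ on $[0,F)$, and then Step 1 applies on every subinterval of the covering, giving $d^{2}(t)\le-\gamma m(t)$ on all of $[0,F)$; both claims follow.

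The principal obstacle is the circular dependence between the two statements: the bound $d^{2}\le-\gamma m$ is exactly what keeps $m$ below zero, while the triggering rule enforces that bound only as long as $m<0$. The continuity argument in Step 2 breaks this loop, and its delicate point is the analysis at the candidate crossing time $t^{\star}$: deducing $d(t^{\star})=0$ from the triggering structure and then reading off a sign contradiction from \eqref{eqn:m-def}, which is precisely where the non-positive signs of the $-\beta_{i}(\cdot)$ terms in \eqref{eqn:m-def} are essential. One also needs the event times and $F=\sup(I)$ to be well defined on the region $\{m<0\}$, a point that is settled jointly with the minimum-dwell-time / Zeno-freeness analysis.
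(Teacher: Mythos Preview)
Your approach differs from the paper's. The paper substitutes the trigger bound $d^{2}\le-\gamma m$ (which holds on each inter-event interval $[t_j,t_{j+1})$ by definition of $t_{j+1}$) directly into \eqref{eqn:m-def} to obtain the differential inequality $\dot m\le-(\eta+\gamma\rho)m-\Sigma$, then integrates to get
\[
m(t)\;\le\; m(t_j)\,e^{-(\eta+\gamma\rho)(t-t_j)}-\int_{t_j}^{t}e^{-(\eta+\gamma\rho)(t-\tau)}\Sigma(\tau)\,d\tau,
\]
so that $m(t_j)<0$ together with $\Sigma\ge 0$ yields $m(t)<0$ at once; the argument then proceeds inductively over $j$ starting from $m(0)<0$. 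This Gronwall route never needs to analyze a crossing time.

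Your contradiction argument at the first crossing $t^{\star}$ is a legitimate alternative strategy, and you correctly isolate the circular dependence that the paper's proof absorbs into its first sentence. However, Step~2 does not close. From $m(t^{\star})=0$, $\dot m(t^{\star})=0$ you obtain $\Sigma(t^{\star})=0$ and hence the target system at equilibrium at $t^{\star}$, but nothing in the lemma's hypotheses excludes the closed-loop trajectory from reaching equilibrium in finite time. Turning this into an actual contradiction would require \emph{backward uniqueness} for the coupled nonlinear parabolic PDE--ODE system (so that equilibrium at $t^{\star}$ forces equilibrium at $t=0$, against nontrivial data), which is a nontrivial result you neither state nor prove; ``excluded by standing assumption'' is not enough. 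The paper's integral estimate sidesteps the issue entirely because the term $m(t_j)e^{-(\eta+\gamma\rho)(t-t_j)}$ is strictly negative regardless of whether $\Sigma$ vanishes. If you wish to retain the contradiction structure, the clean fix is to import that same differential inequality on $[0,t^{\star}]$: Step~1 gives $d^{2}\le-\gamma m$ there, hence $\dot m\le-(\eta+\gamma\rho)m$, and integrating yields $m(t^{\star})\le m(0)e^{-(\eta+\gamma\rho)t^{\star}}<0$, contradicting $m(t^{\star})=0$ with no appeal to equilibrium or backward uniqueness.
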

\begin{proof}
The proof closely resembles that of Lemma 2 in \cite{rathnayake2021observer}, and thus omitted for brevity. 
\end{proof}

\vspace{-1em}

\begin{lemma}
     For all $t\in(t_j,t_{j+1})$ where $j\in\mathbb{N}$, it holds that
     {
\setlength{\abovedisplayskip}{1pt}
\setlength{\belowdisplayskip}{1pt}
     \begin{small}
    \begin{align}
        (\dot{d}(t))^2\leq& \rho_1d^2(t)+\alpha_1X(t)^2+\alpha_2X(t)^4+\alpha_3X(t)^6\nonumber \\
        &+\alpha_4w(0,t)^2+\alpha_5||w(x,t)||^2,
    \end{align}
    \end{small}}
for the following positive constants and defined functions: 
{
\setlength{\abovedisplayskip}{1pt}
\setlength{\belowdisplayskip}{1pt}
    \begin{small}
\begin{align}
    \rho_1&=7|p(0)B|^2, \\
    \alpha_1&=\frac{21}{2}\left|\frac{1}{D}\zeta(y)B\int_0^{l(t)}\varphi(x-l(t))^\top dx\right|^2+28(p(0)Bp(l(t)))^2\nonumber \\
    +&21\left(\left(p(0)\left(1-\frac{a}{D}\right)+\dot{p}(0)\right)B\right)^2(\varphi(0)^\top)^2+28(p(l(t))A)^2\nonumber \\
    +&14\left(\left|\dot{p}(l(t))+\frac{a}{D}p(l(t))+\frac{r_{\rm g}}{D}e_1p(l(t))\right|BH^\top\right)^2, \label{eqn:alpha1}\\
    \alpha_2&=7\left(r_{\rm g}e_1 \dot{p}(l(t))+2k_n\left|\dot{p}(l(t))+\frac{a}{D}p(l(t))\right|B+p(l(t))\kappa\right)^2\\
    \alpha_3&=7\left(\frac{2k_n}{D}r_{\rm g}e_1p(l(t))B \right)^2+28\left(k_mp(l(t))\right)^2, \\
    \alpha_4&=21\left(\left(p(0)\left(1-\frac{a}{D}\right)+\dot{p}(0)\right)B\right)^2,\\
    \alpha_5&=7\left|\frac{1}{D}\zeta(y)B\right|^2\left(\frac{9}{2}+\frac{9}{2}\left(\int_{0}^{l(t)}\int_{x}^{l(t)}q(x,y)^2dydx\right)\right) \nonumber \\
    +&21\left(\left(p(0)\left(1-\frac{a}{D}\right)+\dot{p}(0)\right)B\right)^2\bar{G}(l(t))^2, \label{eqn:alpha5} \\
     &\zeta(y):=\int_0^{l(t)}D\ddot{p}(y)-a\dot{p}(y)+gp(y)-p(0)Bp(y)dy, \\
     &\bar{G}(l(t)):=\int_0^{l(t)}q(0,x)dx
\end{align}
\end{small}}
\end{lemma}
\begin{proof}
The proof closely resembles that of Lemma 2 in \cite{rathnayake2021observer}, and thus it is omitted.
\end{proof}

\vspace{-1.5em}

\section{Main Results}
In this section, we present the analysis for the avoidance of Zeno behavior and closed-loop system stability.

\vspace{-0.5em}

\subsection{Avoidance of Zeno Behavior}
The event-triggering mechanism dictates when to sample the continuous-time control signal, reducing computational and communication complexity. However, defining these sampling times is challenging due to the potential for Zeno behavior, where specific instances may result in infinite triggering within finite time intervals, limiting the mechanism's applicability. To address this, we prove the existence of a minimum dwell-time in the following theorem.

\vspace{-0.5em}

\begin{theorem}
Consider the closed-loop system of \eqref{sys1}-\eqref{sys5} incorporating the control law given by \eqref{eqn:dis-Utj} and the triggering mechanism in Definition 1. There exists a minimum dwell-time denoted as $\tau$ between two consecutive triggering times $t_j$ and $t_{j+1}$, satisfying $t_{j+1}-t_j\geq \tau$ for all $j\in\mathbb{N}$ when $\beta_i$  is selected as follows:
{
\setlength{\abovedisplayskip}{1pt}
\setlength{\belowdisplayskip}{1pt}
\begin{align}
    \beta_i=\frac{\alpha_i}{\gamma(1-\sigma)}
    \label{def-beta-i}
\end{align}}
where $\sigma\in(0,1)$, $i=\{1,...,5\}$ and the values of $\alpha_i$ are provided in equations \eqref{eqn:alpha1}-\eqref{eqn:alpha5}.
\end{theorem}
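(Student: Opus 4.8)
The plan is to establish the dwell-time by studying the ratio $\psi(t) = d^2(t)/(-\gamma m(t))$ on an interval $[t_j, t_{j+1})$ and bounding the time it takes for $\psi$ to travel from $0$ (at $t = t_j$, since $d(t_j)=0$ by the sampling reset) to $1$ (the triggering threshold). Because the next event fires precisely when $d^2(t) = -\gamma m(t)$, i.e. when $\psi$ reaches $1$, a lower bound on this transit time is exactly the minimum dwell-time $\tau$. So the first step is to differentiate $\psi$ and produce a differential inequality of the form $\dot\psi \le a_0 + a_1 \psi + a_2 \psi^2$ for constants $a_i > 0$ depending only on the problem data, not on the solution; then $\tau$ is bounded below by the time for the solution of $\dot\Psi = a_0 + a_1\Psi + a_2\Psi^2$, $\Psi(0) = 0$, to reach $1$, which is an explicit positive number.

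To carry this out, I would compute
\[
\dot\psi(t) = \frac{2 d(t)\dot d(t)}{-\gamma m(t)} - \frac{d^2(t)\,\dot m(t)}{\gamma m(t)^2}.
\]
For the first term I use Young's inequality, $2 d \dot d \le \epsilon d^2 + \epsilon^{-1}\dot d^2$, together with Lemma~2, which bounds $\dot d^2$ by $\rho_1 d^2 + \alpha_1 X^2 + \alpha_2 X^4 + \alpha_3 w_x(0,t)^2 + \alpha_4\|w\|^2 + \alpha_5 w_x(l(t),t)^2$. For the second term I substitute the $\dot m$ ODE \eqref{eqn:m-def}, recalling from Lemma~1 that $m(t) < 0$ on $[0,F)$ so the sign of $-\dot m/m$ is controlled. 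This is where the choice \eqref{def-beta-i}, $\beta_i = \alpha_i/(\gamma(1-\sigma))$, does its work: the negative terms $-\beta_i(\cdot)$ inside $\dot m$, when divided by $\gamma m^2$ and combined with the $\epsilon^{-1}\dot d^2/(-\gamma m)$ contribution and the event-trigger guarantee $d^2 \le -\gamma m$ from Lemma~1, exactly cancel (or dominate) all the state-dependent terms $X^2, X^4, w_x(0,t)^2, \|w\|^2, w_x(l(t),t)^2$, leaving only terms proportional to $1$, $\psi$, and $\psi^2$. The remaining pieces ($-\eta m$ and $\rho d^2$ in $\dot m$, and $\rho_1 d^2$ from Lemma~2) contribute the constants $a_0, a_1, a_2$.

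Once the comparison inequality $\dot\psi \le a_2\psi^2 + a_1\psi + a_0$ is in hand with $\psi(t_j^+) = 0$, the dwell-time follows from the standard comparison argument: $\psi$ stays below the solution $\Psi$ of the Riccati-type scalar ODE, and since $\Psi$ reaches $1$ only after a strictly positive time
\[
\tau = \int_0^1 \frac{ds}{a_2 s^2 + a_1 s + a_0} > 0,
\]
we get $t_{j+1} - t_j \ge \tau$ for every $j$, which also rules out Zeno behavior. I would also note that the argument is uniform in $j$ because $a_0, a_1, a_2$ depend only on the kernel bounds and parameters, not on $j$ (here one uses that $l(t)$ stays in a bounded range near $l_s$, which is part of the local setting).

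The main obstacle I anticipate is the bookkeeping in the cancellation step: verifying that with $\beta_i = \alpha_i/(\gamma(1-\sigma))$ the state-dependent terms genuinely drop out requires tracking the factor $\sigma$ carefully — one splits $\dot d^2 \le \rho_1 d^2 + \sum_i \alpha_i(\cdot)_i$ and matches $\sum_i \alpha_i(\cdot)_i /(-\gamma m)$ against $\sum_i \beta_i (\cdot)_i \cdot d^2/(\gamma m^2)$ using $d^2 \le -\gamma m$, so the $(1-\sigma)$ and $\sigma$ weights must be allocated so that nothing with a wrong sign survives; a secondary subtlety is handling the term $-d^2 \dot m/(\gamma m^2)$ when $\dot m$ contains the positive piece $\rho d^2$, since that feeds back a $\psi^2$ term and one must confirm $a_2$ is finite. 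Everything else is routine application of Young's and Cauchy–Schwarz inequalities and the comparison principle.
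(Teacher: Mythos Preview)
Your overall strategy—define a ratio $\psi$, derive a Riccati-type differential inequality $\dot\psi \le a_2\psi^2+a_1\psi+a_0$, and invoke the comparison principle—is exactly the route the paper takes. The gap is in the specific choice of $\psi$. With $\psi = d^2/(-\gamma m)$, the state-dependent contribution from Lemma~2 enters $\dot\psi$ as the positive term $\sum_i \alpha_i S_i/(-\gamma m)$ (writing $S_i$ for $X^2,X^4,w_x(0,t)^2,\|w\|^2,w_x(l(t),t)^2$), while the helpful negative contribution from $\dot m$ is $-\,d^2\sum_i\beta_i S_i/(\gamma m^2) = -\psi\sum_i\beta_i S_i/(-m)$. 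For the latter to absorb the former you would need $\alpha_i(-m)\le \beta_i d^2$, i.e.\ a \emph{lower} bound $d^2\ge \gamma(1-\sigma)(-m)$; but Lemma~1 only gives the \emph{upper} bound $d^2\le -\gamma m$, and at $t_j^+$ one has $d=0$, so the needed inequality fails immediately after every trigger. Concretely, with $\beta_i=\alpha_i/(\gamma(1-\sigma))$ the residual is $\frac{\sum_i\alpha_i S_i}{-\gamma m}\cdot\frac{1-\sigma-\psi}{1-\sigma}$, which is strictly positive and uncontrolled for $\psi\in[0,1-\sigma)$. You therefore cannot reach a state-independent Riccati inequality with this $\psi$.

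The paper's fix is to take
\[
\psi(t)=\frac{d^2(t)+\gamma(1-\sigma)m(t)}{-\gamma\sigma m(t)},
\]
so that differentiating the numerator produces the additional term $\gamma(1-\sigma)\dot m/(-\gamma\sigma m)$. Its $S_i$-part is $(1-\sigma)\sum_i\beta_i S_i/(\sigma m)<0$, and with $\beta_i=\alpha_i/(\gamma(1-\sigma))$ this equals $-\sum_i\alpha_i S_i/(-\gamma\sigma m)$, cancelling the positive $S_i$-term from $\dot d^2$ \emph{identically}, without any appeal to $d^2\le -\gamma m$. The remaining $S_i$-piece from $-\psi\dot m/m$ is $\psi\sum_i\beta_i S_i/m\le 0$ once $\psi\ge 0$, so it may be dropped. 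Using $d^2=-\gamma m(\sigma\psi+1-\sigma)$ to rewrite the leftover $d^2$-terms then yields $\dot\psi\le a_1\psi^2+a_2\psi+a_3$ with the paper's constants ($a_1=\rho\sigma\gamma$, etc.), and since $\psi(t_j^+)=-(1-\sigma)/\sigma$ while triggering occurs at $\psi=1$, the comparison argument gives $t_{j+1}-t_j\ge\int_0^1 ds/(a_1s^2+a_2s+a_3)>0$. Your plan becomes correct with this single modification of $\psi$; the parameter $\sigma$ is precisely what allocates part of $m$ to the numerator so that the direct $\dot m$ contribution, rather than the $d^2\dot m/m^2$ term, performs the cancellation.
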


\vspace{-0.5em}

\begin{proof}
By using Lemma 1, we define the continuous function $\psi(t)$ in $[t_j,t_{j+1})$ to derive the lower bound between interexecution times as follows:
{
\setlength{\abovedisplayskip}{1pt}
\setlength{\belowdisplayskip}{1pt}
\begin{align}
\psi(t):=\frac{d^2(t)+\gamma(1-\sigma)m(t)}{-\gamma \sigma m(t)}
\label{def-psi}
\end{align}}
As described in \cite{Espitia2017}, one can show that 
{
\setlength{\abovedisplayskip}{1pt}
\setlength{\belowdisplayskip}{1pt}
\begin{align}
    \dot{m}(t)=&-\eta m(t)+\rho d(t)^2-\beta_1 X(t)^2-\beta_2X(t)^4\nonumber \\
    &-\beta_3X(t)^6-\beta_4|w(0,t)|^2-\beta_5 ||w||^2 
\end{align}}
Taking the time derivative of $\eqref{def-psi}$ and using Lemma 1, we can choose $\beta_i$ as described in \eqref{def-beta-i}.
Thus, we get $\dot{\psi}(t)\leq a_1 \psi(t)^2+a_2\psi(t)+a_3$, where 
$a_1=\rho\sigma\gamma>0$, $a_2=1+2\rho_1+(1-\sigma)\rho+\eta>0$ and $a_3=(1+\rho_1+\gamma(1-\sigma)\rho+\eta)\frac{1-\sigma}{\sigma}>0$. Using the comparison principle and the argument in \cite{Espitia2017}, one can prove that there exists a time minimum dwell-time $\tau$ as follows:
{
\setlength{\abovedisplayskip}{1pt}
\setlength{\belowdisplayskip}{1pt}
\begin{align}
    \tau=\int_{0}^{1}\frac{1}{a_1s^2+a_2s+a_3}ds
\end{align}}
which completes the proof.
\end{proof}

\vspace{-0.5em}

\subsection{Stability Analysis}
In this section, we initially introduce the main theorem, which establishes stability.

\vspace{-0.5em}

\begin{theorem}
    Consider the closed-loop system comprising the plant described by \eqref{sys1}-\eqref{sys5} along with the control law specified by \eqref{eqn:dis-Utj} and employing an event-triggering mechanism that is defined in Definition 1. Let 
{
\setlength{\abovedisplayskip}{1pt}
\setlength{\belowdisplayskip}{1pt}
    \begin{equation}
\rho\geq \frac{d_1^2D}{\delta_1} \label{eqn:def-gamma}
    \end{equation}}
    and $\eta>0$ be design parameters, $\sigma\in(0,1)$ while $\beta_i$ for $i=\{1,2,3,4,5\}$ are chosen 
  as in \eqref{eqn:alpha1}-\eqref{eqn:alpha5}. 
 Then, there exist constants $M>0$, $c>0$ and $\Gamma$, such that, if initial conditions is such that $Z(0)<M$ then the following norm estimate is satisfied:
 {
\setlength{\abovedisplayskip}{1pt}
\setlength{\belowdisplayskip}{1pt}
    \begin{align}
        Z(t)\leq cZ(0)exp(-\Gamma t),
    \end{align}}
    for all $t\geq0$, in $L_2$-norm 
$Z(t)=||u(.,t)||_{L_2}^2+X^\top X$ which establishes the local exponential stability of the origin of the closed-loop system.
\end{theorem}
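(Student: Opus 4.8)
The plan is to carry out a PDE backstepping Lyapunov argument on the target system \eqref{tar-PDE-nonlin}--\eqref{tar-ODE-nonlin} --- in which the sampling error $d(t)$ enters only through the boundary condition $w(0,t)=d(t)$ --- and then transport the resulting bound back to the $(u,X)$ variables using the invertibility of the transformation \eqref{bkst}--\eqref{eqn:inv-bkst}. I would work with a Lyapunov functional of the form
\begin{align*}
V(t) = {}& \tfrac{b_1}{2}\!\int_0^{l(t)}\!\! w(x,t)^2\,dx + \tfrac{b_2}{2}\!\int_0^{l(t)}\!\! w_x(x,t)^2\,dx \\
&{}+ b_3\, X(t)^\top P X(t) - b_4\, m(t),
\end{align*}
where $P=P^\top\succ 0$ solves $P(A+BK)+(A+BK)^\top P=-Q$ for some $Q\succ 0$ (available since $A+BK$ is Hurwitz by the gain choice $k_1>\tilde a_1/\beta$, $k_2>\tilde a_3/\beta$), the term $-b_4 m(t)$ is nonnegative by Lemma~1, and $b_1,\dots,b_4>0$ are to be fixed. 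The two-sided bound $\underline c\,Z(t)\le V(t)\le \bar c\,Z(t)$ with $Z$ as in the statement follows from boundedness of the kernels $k,\phi,q,\varphi$ on the time-varying but a priori bounded domain $[0,l(t)]$; this is exactly the step where equivalence of the $w$- and $u$-representations is invoked.

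Next I would differentiate $V$ along the closed-loop solution. The PDE energy produces $-Db_1\|w_x\|^2-gb_1\|w\|^2$ plus boundary contributions: at $x=0$ the Dirichlet condition and the transport term $-aw_x$ generate terms in $d(t)^2$ and $w_x(0,t)^2$; at $x=l(t)$ they generate terms in $h^*(X(t))$ and $w_x(l(t),t)^2$; and the moving-boundary residual $\dot l(t)\,(\cdot)$ together with $\dot l(t)=r_{\rm g}z_1(t)$ couples back into $X$. Differentiating the $w_x$-energy additionally requires $w_{xt}$ and, after integration by parts, yields a term in $\dot d(t)^2$ --- which is precisely why Lemma~2 is needed, bounding $\dot d^2$ by $\rho_1 d^2+\alpha_1 X^2+\alpha_2 X^4+\alpha_3 w_x(0,t)^2+\alpha_4\|w\|^2+\alpha_5 w_x(l(t),t)^2$. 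Using Young's, Agmon's and Poincar\'e's inequalities, the higher-order bounds $|h^*(X)|\le 2k_n X^\top X$ and $f(X)\le \kappa X^\top X+2k_m|X^\top X|^{3/2}$ from Lemma~2, and finally the contribution $-b_4\dot m$ via \eqref{eqn:m-def} --- which supplies $-b_4\eta\,|m(t)|$, a favorable $-b_4\rho\, d(t)^2$ (dominating the $d^2$ terms once $\rho$ is taken as in \eqref{eqn:def-gamma}), and the terms $+b_4(\beta_1 X^2+\dots+\beta_5 w_x(l(t),t)^2)$ --- the choice $\beta_i=\alpha_i/(\gamma(1-\sigma))$ from \eqref{def-beta-i} together with the trigger inequality $d^2\le-\gamma m$ is tuned to absorb exactly the matching indefinite traces. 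Collecting terms and then choosing $\gamma$ large as in \eqref{eqn:def-gamma}, followed by $b_1,\dots,b_4$, gives $\dot V\le -cV+(\text{cubic-and-higher terms in }V)$.

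The final, local, step is a bootstrap: the cubic/quartic remainders (from $f$, from $\tilde h$ in $h^*$, and from $\dot l$ multiplying state-dependent boundary traces) are $o(V)$, so there is $M>0$ such that $Z(0)<M$ --- equivalently $V(0)$ small --- forces $V(t)$ to remain in the region where these remainders are dominated by $\tfrac c2 V$, whence $\dot V\le -\tfrac c2 V$, $V(t)\le V(0)e^{-ct/2}$, and therefore $Z(t)\le c\,Z(0)e^{-\Gamma t}$ with $\Gamma=c/2$ after using the norm equivalence and the inverse transformation to return to $u$. I expect the main obstacle to be the moving boundary: every time-differentiation of a spatial integral over $[0,l(t)]$ sheds boundary terms weighted by $\dot l(t)=r_{\rm g}z_1(t)$, coupling the PDE energy to the ODE state in a way controllable only when $X$ is already known to be small; so the $w_x$-energy estimate, the sign bookkeeping of the boundary traces at $x=0$ and $x=l(t)$, and the mutually consistent selection of the many constants $b_i,\beta_i,\gamma,\rho,\eta,\sigma$ so that the indefinite traces genuinely cancel are the delicate parts --- and keeping the entire argument inside the small-data regime, so that the non-quadratic terms stay subordinate, is what makes the conclusion local rather than global.
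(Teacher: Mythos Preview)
Your proposal is correct and follows essentially the same route as the paper: the same Lyapunov functional $V=d_1V_1+V_2+d_2V_3-m(t)$ (your $b_i$ with $b_2=b_4=1$), the same appearance of $\dot d$ via $w_t(0,t)=\dot d(t)$ in the $\|w_x\|^2$-estimate handled by Lemma~2, the same absorption of $d^2$ and the boundary traces through the $-\dot m$ contribution and the choices of $\rho,\gamma,\beta_i$, and the same local bootstrap driven by the super-quadratic remainders (the paper packages this as Lemmas~3--5, extracting an invariant sublevel set from the positive root of $-\alpha^*V+\sum_i\xi_iV^{1+i/2}=0$ and concatenating over the triggering intervals). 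One minor slip: the two-sided bound $\underline c\,Z\le V\le \bar c\,Z$ does not hold as written, since $-m(t)$ is not a priori dominated by $Z(t)$; the paper uses only the lower bound together with the fixed datum $-m(0)$ at $t=0$, which is all that is needed.
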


\vspace{-0.5em}

To establish local stability on a non-constant spatial interval, we rely on two system properties from \cite{demir2021neuroncontrol}, outlined below:
{
\setlength{\abovedisplayskip}{1pt}
\setlength{\belowdisplayskip}{1pt}
    \begin{align}
     0 < l(t) \leq \bar l, \quad
    |\dot l(t) | \leq \bar v, \label{ineq-ldot}  
\end{align}}
for some $\bar l>l_{\rm s} >0$ and $\bar v>0$. Then, we consider the following Lyapunov functionals
{
\setlength{\abovedisplayskip}{1pt}
\setlength{\belowdisplayskip}{1pt}
\begin{align}
\label{V1-def}    V_1 =& \fr{1}{2} ||\varpi||^2 := \fr{1}{2} \int_0^{l(t)} \varpi(x,t)^2 dx, \\
V_2 =& X(t)^\top P_1 X(t),  \quad V_3=\frac{1}{2}X(t)^\top P_2X(t)
    \label{V3-def} 
\end{align}}
where $P_1\succ0$ and $P_2\succeq 0$ are positive definite and positive semidefinite matrices satisfying the Lyapunov equations: 
{
\setlength{\abovedisplayskip}{1pt}
\setlength{\belowdisplayskip}{1pt}\begin{align}
    &(A + BK^\top )^\top P_1 + P_1 (A + BK^\top ) = - Q_1, \nonumber \\
    &(A + BK^\top )^\top (P_1+P_2) + (P_1+P_2) (A + BK^\top ) = - Q_2 \nonumber
\end{align}}
where
{
\setlength{\abovedisplayskip}{1pt}
\setlength{\belowdisplayskip}{1pt}
\begin{align}
    P_1=\begin{bmatrix}
        p_{1,1} & p_{1,2} \\
        p_{1,2} & p_{2,2} 
    \end{bmatrix}, \quad P_2=\begin{bmatrix} \frac{D\epsilon_1}{\beta}-2p_{1,1} &0 \\ 0 & 0\end{bmatrix}
\end{align}}
where we pick $\epsilon\in\mathbb{R}^2$ as 
$\epsilon_1\geq 2l_{\rm c}p_{1,1}$ and  $\epsilon_2 = \frac{p_{1,2}}{l_{\rm c}d_1}$ for some positive definite matrices $Q_1\succ0$ and $Q_2\succ0$. We define the total Lyapunov function as
{
\setlength{\abovedisplayskip}{1pt}
\setlength{\belowdisplayskip}{1pt}
\begin{align}
    V(t) = d_1 V_1(t) + V_2(t)  + d_2 V_3(t)-m(t), 
    \label{Vtotal}
\end{align}}
where $d_1>0$ and $d_2>0$ are parameters to be determined.

\vspace{-0.5em}

\begin{lemma}
Assume that the conditions in \eqref{ineq-ldot} are satisfied with $\bar v=\frac{D}{16(D+1)}$, for all $t\geq 0$. Then, for sufficiently large $d_1>0$ and sufficiently small $d_2<0$, there exist positive constants $\xi_i$ for $i=\{1,2,3,4,5\}$ such that the following norm estimate holds for $t\in (t_j,t_{j+1})$, $j\in\mathbb{N}$:
{
\setlength{\abovedisplayskip}{1pt}
\setlength{\belowdisplayskip}{1pt}
\begin{small}
    \begin{align}
        \dot{V}\leq -\alpha^*V+\sum_{i=1}^4\xi_iV^{(1+\frac{i}{2})}
    \end{align}
\end{small}}
where $\alpha^*=\min\left\{\frac{g}{2},\frac{1}{2\lambda_{\min}(P_1+P_2)},\eta\right\}$.
\end{lemma}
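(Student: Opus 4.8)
The plan is to differentiate the composite functional $V=d_1V_1+V_2+d_2V_3-m$ along the nonlinear target system \eqref{tar-PDE-nonlin}--\eqref{tar-ODE-nonlin}, estimate each contribution, and then fix the free parameters so that the quadratic part of $\dot V$ is dominated by $-\alpha^*V$ while every cubic-and-higher term is packaged into $\sum_{i=1}^{5}\xi_iV^{1+i/2}$. The four pieces play distinct roles. Differentiating $V_2=\tfrac12\|w_x\|^2$ with the Leibniz rule, integrating $\int_0^{l(t)}w_xw_{xt}\,dx$ by parts and substituting the PDE for $w_t$, one obtains a \emph{free} dissipation term $-D\|w_{xx}\|^2$ (not present in $V$) together with $-g\|w_x\|^2$; the $-D\|w_{xx}\|^2$ is the budget used to control pointwise boundary values of $w_x$. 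The term $d_1V_1$ with $d_1$ large supplies $-gd_1\|w\|^2$ and, after a Poincaré estimate using $w(0,t)=d(t)$ (so $\|w\|^2\le 2\bar l\,d^2+2\bar l^2\|w_x\|^2$), an extra margin on $\|w\|^2$; $d_2V_3$ with $d_2$ small contributes $-d_2X^\top QX$ through the Lyapunov equation $(A+BK^\top)^\top P+P(A+BK^\top)=-Q$ while its cross term $2d_2X^\top PB\,w_x(l(t),t)$ stays subordinate; and $-m$, whose derivative is $-\dot m=\eta m-\rho d^2+\beta_1X^2+\beta_2X^4+\beta_3w_x(0,t)^2+\beta_4\|w\|^2+\beta_5w_x(l(t),t)^2$, furnishes the genuine decay $-\eta(-m)$ (recall $m<0$) and the cancellation $-\rho d^2$.

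Concretely I would first compute $\dot V_1$ and $\dot V_2$ using the Leibniz rule for the moving domain, integration by parts, and the boundary data $w(0,t)=d(t)$, $w(l(t),t)=h^*(X(t))$, $\dot X=(A+BK^\top)X+f(X)+Bw_x(l(t),t)$; in $\dot V_2$ the boundary contribution forces $w_t(0,t)=\dot d(t)$ to appear, at which point Lemma 2 replaces $\dot d(t)^2$ by $\rho_1d^2+\alpha_1X^2+\alpha_2X^4+\alpha_3w_x(0,t)^2+\alpha_4\|w\|^2+\alpha_5w_x(l(t),t)^2$. The advection/diffusion boundary terms produce a $\tfrac a2 d^2$ (scaled by $d_1$ in $d_1\dot V_1$), a helpful $-\tfrac a2 w_x(0,t)^2$, and a positive multiple of $w_x(l(t),t)^2$; the remaining pointwise $w_x$-boundary terms are converted to interior norms by an Agmon/trace inequality $w_x(0,t)^2+w_x(l(t),t)^2\le \delta\|w_{xx}\|^2+C_\delta\|w_x\|^2$, with the $\delta$-part absorbed by the free $-D\|w_{xx}\|^2$ and the $C_\delta\|w_x\|^2$-part by the large $-Dd_1\|w_x\|^2$; closing this needs $\beta_3,\beta_5$ small, which is exactly what $\gamma>16(\alpha_3+\alpha_5)/(D(1-\sigma))$ together with $\beta_i=\alpha_i/(\gamma(1-\sigma))$ delivers. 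All the accumulated $d^2$ terms (from $\tfrac a2 d^2$, from $\rho_1 d^2$ via Lemma 2, and from the Young splittings of $d\,w_x(0,t)$ and $g\,d\,w_x(0,t)$) are summed and matched by $-\rho d^2$ with $\rho$ as in \eqref{eqn:def-gamma}. The moving-boundary contributions $-\dot l(t)\int_0^{l(t)}w\big(k(x,l)u(l,t)-\phi'(x-l)^\top X\big)\,dx$, $-\tfrac12\dot l(t)w_x(l(t),t)^2$, and the forcing terms $-\phi^\top f(X)$, $-(\cdot)h^*(X)$ in \eqref{tar-PDE-nonlin} are all first-order-small, since $\dot l(t)=r_{\rm g}z_1(t)$ and $f,h^*$ are $O(|X|^2)$; bounding $|\dot l|\le\bar v=\tfrac{D}{16(D+1)}$ keeps their quadratic remnants negligible and sends the rest to the higher-order bucket.

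Finally, $d_2\dot V_3=-d_2X^\top QX+2d_2X^\top PBw_x(l(t),t)+2d_2X^\top Pf(X)$, with the second term split by Young against the $-D\|w_{xx}\|^2$/trace budget and the third cubic; collecting everything, the quadratic part of $\dot V$ is a negative-definite form whose coefficients, after choosing $d_1$ large, $d_2$ small, and $\rho,\gamma$ as prescribed, each exceed $\alpha^*$ times the matching piece of $V$, so $\dot V\le -\alpha^*V+(\text{h.o.t.})$ with $\alpha^*=\min\{g+\tfrac{d_1D}{2},\tfrac{d_1g}{4},\tfrac{d_2\lambda_{\min}(Q)}{4},\eta\}$. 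The higher-order terms are the $|X|$-monomials of degree up to $3$ coming from $|h^*(X)|\le 2k_nX^\top X$ and $|f(X)|\le\kappa X^\top X+2k_m|X^\top X|^{3/2}$, the $\dot l$-weighted terms, and all products of these with $\|w\|,\|w_x\|$ generated by Young's inequality; each such monomial in $(\|w\|,\|w_x\|,|X|)$ has total degree between $3$ and $7$, and using $\|w\|^2\le\tfrac{2}{d_1}V$, $\|w_x\|^2\le2V$, $|X|^2\le\tfrac{1}{d_2\lambda_{\min}(P)}V$ it is bounded by $\xi_iV^{1+i/2}$ for the corresponding $i\in\{1,\dots,5\}$, which gives the estimate. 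The main obstacle is precisely the boundary bookkeeping in $\dot V_2$: one must verify that after Lemma 2 and the trace inequality the net positive coefficients of $w_x(0,t)^2$, $w_x(l(t),t)^2$ and $d^2$ are genuinely dominated by $-D\|w_{xx}\|^2$, $-Dd_1\|w_x\|^2$ and $-\rho d^2$ — this is what pins down $\bar v$, $\gamma$, $\rho$ and forces $d_1$ large, $d_2$ small — and that no nonlinear monomial exceeds degree $7$, so the sum truncates at $i=5$.
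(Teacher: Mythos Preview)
Your proposal is correct and follows essentially the same approach as the paper: differentiate $V=d_1V_1+V_2+d_2V_3-m$ along the target system, integrate by parts, invoke $w_t(0,t)=\dot d(t)$ together with Lemma~2, apply Poincar\'e/Agmon/Young, and then choose $d_1$ large and $d_2$ small (together with the prescribed $\rho,\gamma$) so that the quadratic part yields $-\alpha^*V$ while the $h^*(X)$, $f(X)$, $\dot l\,F$ contributions are pushed into $\sum_i\xi_iV^{1+i/2}$. Your write-up is in fact more explicit than the paper's (which compresses the boundary bookkeeping into ``applying Poincar\'e's, Agmon's, and Young's inequalities''), in particular your identification of the free $-D\|w_{xx}\|^2$ budget from $\dot V_2$ and its role in absorbing the trace terms is exactly the mechanism behind the paper's Agmon step.
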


\begin{proof}
We take the time derivative of the Lyapunov functional \eqref{V1-def}-\eqref{V3-def} along the target system, substituted boundary conditions for $t\in(t_j+t_{j+1})$, $j\in\mathbb{N}$ with
{
\setlength{\abovedisplayskip}{1pt}
\setlength{\belowdisplayskip}{1pt}
\begin{align}
    \varpi_x(l(t),t)=\bar{B}\left(\dot{X}-(A+BK)X(t)-f(X(t))\right)
\end{align}}
where $\bar{B}=[-\beta^{-1}~ 0]$. Then, applying Poincaré's, Agmon's, and Young's inequalities, \eqref{eqn:def-gamma}, along with \eqref{eqn:m-def}, and using the following inequalities 
{
\setlength{\abovedisplayskip}{1pt}
\setlength{\belowdisplayskip}{1pt}
\begin{align} 
   \left|h(X)\right| &\leq 2k_n X^\top X+|H^\top X|, \label{def-boundH}\\
   f(X(t)) &\leq \kappa X^\top X+2k_m|X^\top X|^{3/2},
\end{align}}

\noindent where $k_n=c_{\infty}\max\{K_{+}\lambda_{+}^2,K_{-}\lambda_{-}^2\}$ and $ ~ 
    k_m=c_{\infty}\max\{K_{+}\lambda_{+}^3,K_{-}\lambda_{-}^3\}$
by utilizing $-e^x+x+1\leq x^2$ for $x\leq 1.79$, the expression for \eqref{Vtotal} can be transformed into:
{
\setlength{\abovedisplayskip}{1pt}
\setlength{\belowdisplayskip}{1pt}
\begin{small}
\begin{align}
     \dot{V}\leq &-\alpha^* V +\xi_1 V^{3/2}+\xi_2 V^2+\xi_3V^{5/2}+\xi_4V^{3}
    \label{def-dotVtot3}
\end{align}
\end{small}}
where
{
\setlength{\abovedisplayskip}{1pt}
\setlength{\belowdisplayskip}{1pt}
\begin{small}
\begin{align}
    \xi_1&=\frac{\left(Dd_1|\epsilon \bar{B}|+2d_2\left|P_1^\top B\bar{B}\right|\right)\kappa^2+\frac{d_1r_{\rm g}}{2}(1+L_1)+r_{\rm g}}{d_2^{3/2}\lambda_{\min}(P_1+P_2)^{3/2}} \label{def:Xi1-new}\\
    \xi_2&=\frac{\Xi_1}{d_2^{2}\lambda_{\min}(P_1+P_2)^2}, \quad 
    \xi_3=\frac{4d_2k_m|P_1|}{d_2^{5/2}\lambda_{\min}(P_1+P_2)^{5/2}},\\
    \xi_4&=\frac{\Xi_2}{d_2^3\lambda_{\min}(P_1+P_2)^3},\label{def:Xi4-new}
\end{align}
\end{small}}

\noindent taking into account $\dot{l}(t)=r_{\rm g}e_2^\top X$ and choosing the constants $d_1$ and $d_2$ to satisfy
{
\setlength{\abovedisplayskip}{1pt}
\setlength{\belowdisplayskip}{1pt}
\begin{small}
\begin{align}
    d_1&\geq \max\left\{\frac{8\bar{l}\left(D+2\right)+16\bar{l}\beta_4}{D},\frac{4\beta_5+7}{g}\right\}, \\
    d_2&\geq \frac{4}{\lambda_{\min}(Q_2)}\left(Dd_1\left|\epsilon \bar{B}|(A+BK)\right|+\beta_1\right) \nonumber \\
    &+\frac{4}{\lambda_{\min}(Q_2)}\left(\left(D+2+Dd_1+\frac{d_1a}{2}+2\beta_4\right)\frac{2}{\beta^2}\right).
\end{align}
\end{small}}
%
Note that the positive constants in \eqref{def:Xi1-new}-\eqref{def:Xi4-new} are given as
{
\setlength{\abovedisplayskip}{1pt}
\setlength{\belowdisplayskip}{1pt}
\begin{small}
\begin{align}
    &F(0,X(t))^2 \leq L_1 X^\top X, \quad \int_0^{l(t)}\left(\phi(x-l(t))^\top\right)^2dx  \leq  L_{n_2}, \nonumber
    \\
    &\int_0^{l(t)}\left(\phi'(x-l(t))^\top B -ak(x,l(t))\right)^2dx  \leq L_{n_3}
\label{eqn:Ln3}
    \\
    &\Xi_1=4Dd_1|\epsilon \bar{B}|k_m^2|P_1|^2+8d_2\left|P_1^\top B\bar{B}\right|k_m^2|P_1|^2+\beta_2\nonumber \\
    &\quad +2d_1^2L_{n_3}k_n^2+\frac{d_1^2}{2}L_{n_2}\kappa^2+d_1^2c_{\infty}^2r_{\rm g}^2k_l+8d_2\kappa |P|\beta_5 k_{l}\ \nonumber\\
    &\quad+2d_2\kappa |P|\left(d_1^2\left(\beta^2(1-\epsilon_1)^2\left(1+\bar{G}(l(t))^2\right)+D\right)\right)k_l
    \\
    &\Xi_2=d_1^2c_{\infty}^2r_{\rm g}^2k_l+\frac{d_1^2}{2}L_{n_2}4k_m^2|P_1|^2+\beta_3 \nonumber \\
    &\quad+\left(d_1^2\left(\beta^2(1-\epsilon_1)^2\left(1+\bar{G}(l(t))^2\right)+D\right)+4\beta_5\right)k_l 
    \\
    &k_l=\max\left\{\left|K_+\lambda_+\right|,\left|K_-\lambda_-\right|\right\}^2
\end{align}
\end{small}}
which completes the proof of Lemma 3.
\end{proof}

\vspace{-0.5em}

In this next section, we ensure the local stability of the closed-loop system with the event-triggering mechanism. 

\vspace{-0.5em}

\begin{lemma}
    In the region $\Omega_1:=\{(\varpi,X) \in L_2 \times \mathbb{R}^2 | V(t) < M_0\}$ where $t\in(t_j,t_{j+1})$ $j\in\mathbb{N}$, there exists a positive constant $M_0>0$ such that the conditions in \eqref{ineq-ldot} hold.
\end{lemma}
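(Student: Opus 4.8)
The plan is to read a smallness bound on the finite-dimensional state $X(t)$ directly off the composite Lyapunov functional \eqref{Vtotal}, and then to convert it into the geometric constraints \eqref{ineq-ldot}, using that the moving boundary and its velocity are elementary functions of $X(t)$. Writing $X(t)=[z_1(t)\ \ z_2(t)]^\top$, we have $z_2(t)=l(t)-l_{\rm s}$ and, since $l_{\rm s}$ is constant, $\dot l(t)=\dot z_2(t)=r_{\rm g}z_1(t)$ by \eqref{errorcond5}; hence any bound of the form $\norm{X(t)}<R$ yields at once $|l(t)-l_{\rm s}|<R$ and $|\dot l(t)|<r_{\rm g}R$.

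First I would lower-bound $V(t)$ by a multiple of $\norm{X(t)}^2$. Recall from Lemma 1 that $m(t)<0$ on $(t_j,t_{j+1})\subseteq[0,F)$, so $-m(t)>0$; combining this with $d_1V_1(t)\ge0$, $V_2(t)\ge0$ and $d_2V_3(t)=d_2\,X(t)^\top P X(t)\ge d_2\lambda_{\rm min}(P)\norm{X(t)}^2$ (using $d_2>0$ and $P>0$), the definitions \eqref{V1-def}--\eqref{V3-def} and \eqref{Vtotal} give
\begin{align}
V(t) \ge d_2\,\lambda_{\rm min}(P)\,\norm{X(t)}^2 .
\end{align}
Hence on $\Omega_1$ we have $\norm{X(t)}^2 < M_0/(d_2\lambda_{\rm min}(P)) =: R^2$, so $|l(t)-l_{\rm s}|<R$ and $|\dot l(t)|<r_{\rm g}R$ there. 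It then remains to fix the constants: Lemma 3 pins $\bar v=\frac{D}{16(D+1)}$, while $\bar l>l_{\rm s}$ is still free, so I would take $M_0$ small enough that $R=\sqrt{M_0/(d_2\lambda_{\rm min}(P))}$ satisfies both $R\le l_{\rm s}$ and $r_{\rm g}R\le\bar v$, e.g. $M_0\le d_2\lambda_{\rm min}(P)\min\{l_{\rm s}^2,\ \bar v^2/r_{\rm g}^2\}$, and then set $\bar l:=l_{\rm s}+R$. On $\Omega_1$ this gives $0<l_{\rm s}-R<l(t)<l_{\rm s}+R=\bar l$ and $|\dot l(t)|<r_{\rm g}R\le\bar v$, which is exactly \eqref{ineq-ldot}.

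The argument is essentially bookkeeping, so I do not expect a real obstacle. The two points that need care are (i) invoking Lemma 1 so that the otherwise unbounded term $-m(t)$ enters \eqref{Vtotal} with the right (positive) sign and the lower bound $V\ge d_2\lambda_{\rm min}(P)\norm{X(t)}^2$ survives, and (ii) confirming that the value $\bar v=\frac{D}{16(D+1)}$ forced by Lemma 3 is compatible with this choice --- which it is, since $R=\sqrt{M_0/(d_2\lambda_{\rm min}(P))}\to0$ as $M_0\to0^{+}$, and $\bar l$ may be enlarged freely to accommodate the resulting range of $l(t)$.
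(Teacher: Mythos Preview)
Your proposal is correct and takes essentially the same approach as the paper: the paper simply defers to Lemma~2 of their earlier work \cite{demir2021neuroncontrol} and then records $M_0=\frac{\lambda_{\rm min}(P)}{d_2}r^2$, which (up to what appears to be a typo in the placement of $d_2$) is exactly your $M_0=d_2\lambda_{\rm min}(P)R^2$ obtained from the lower bound $V\ge d_2\lambda_{\rm min}(P)\norm{X}^2$. Your explicit invocation of Lemma~1 to ensure $-m(t)>0$ is the only ingredient beyond the cited continuous-time argument, and you handle it correctly.
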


\vspace{-0.5em}

\begin{proof}
    See the proof of Lemma 2 in \cite{demir2021neuroncontrol}.  
\end{proof}

\vspace{-0.5em}

From the proof of Lemma 4, we have $M_0=\frac{\lambda_{\rm min}(P)}{d_2}r^2$ for $t\in (t_j,t_{j+1})$, $j\in\mathbb{N}$. Next, we analyze stability within the time interval $t \in (t_j, t_{j+1})$ for $j \in \mathbb{N}$, and subsequently for $t\in(0,t)$. Within this interval, we establish the following lemma:

\vspace{-0.5em}

\begin{lemma}
    There exists a positive constant $M_j$ such that if $V(t_j)<M_j$ then the following norm estimate holds for $t \in (t_j, t_{j+1})$, where $j \in \mathbb{N}$:
    {
\setlength{\abovedisplayskip}{1pt}
\setlength{\belowdisplayskip}{1pt}
    \begin{align}
        V(t_{j+1})\leq V(t_j)e^{-\frac{\alpha^*}{2}(t_{j+1}-t_j)}
    \end{align}}
\end{lemma}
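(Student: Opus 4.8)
The plan is to turn the differential inequality of Lemma~3 into a genuine exponential decay bound by absorbing the higher-order terms once the state is small. Writing the estimate of Lemma~3, with $g(s):=\sum_{i=1}^5 \xi_i s^{i/2}$, as
\begin{align*}
\dot V(t)\le -\alpha^* V(t)+V(t)\,g\bigl(V(t)\bigr),\qquad t\in(t_j,t_{j+1}),
\end{align*}
one observes that $g$ is continuous, strictly increasing, and $g(0)=0$, so there is a threshold $\bar M>0$ with $g(\bar M)=\alpha^*/2$; consequently $\dot V\le -\tfrac{\alpha^*}{2}V$ whenever $V\le\bar M$. I would take $M_j:=\min\{\bar M,M_0\}$ with $M_0$ the radius supplied by Lemma~4: intersecting with $M_0$ keeps the trajectory inside $\Omega_1$, so that the a priori bounds \eqref{ineq-ldot} on $l(t)$ and $\dot l(t)$ underlying Lemma~3 remain valid on the whole interval. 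Since $\bar M$ and $M_0$ do not depend on $j$, $M_j$ may in fact be taken uniform in $j$.

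Next I would close the argument with a continuation/invariance step on $[t_j,t_{j+1})$. Assuming $V(t_j)<M_j$, set $T^*:=\sup\{t\in[t_j,t_{j+1}) : V(s)\le M_j\ \text{for all } s\in[t_j,t]\}$; continuity of $V$ and $V(t_j)<M_j$ give $T^*>t_j$. On $[t_j,T^*)$ we have $V\le M_j\le\bar M$, hence $\dot V\le -\tfrac{\alpha^*}{2}V$, so that $\frac{d}{dt}\bigl(e^{\alpha^* t/2}V(t)\bigr)\le 0$ and therefore $V(t)\le V(t_j)e^{-\frac{\alpha^*}{2}(t-t_j)}<M_j$ on $[t_j,T^*)$ (using $V(t_j)<M_j$ and $e^{-\frac{\alpha^*}{2}(t-t_j)}\le 1$). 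If $T^*<t_{j+1}$, continuity yields $V(T^*)\le V(t_j)e^{-\frac{\alpha^*}{2}(T^*-t_j)}<M_j$, so $V<M_j$ persists past $T^*$, contradicting maximality; hence $T^*=t_{j+1}$ and $V(t)\le V(t_j)e^{-\frac{\alpha^*}{2}(t-t_j)}$ for all $t\in[t_j,t_{j+1})$.

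It then remains to let $t\uparrow t_{j+1}$, which gives $V(t_{j+1})\le V(t_j)e^{-\frac{\alpha^*}{2}(t_{j+1}-t_j)}$ provided $V$ is left-continuous at $t_{j+1}$. This last point is the delicate one, and the expected main obstacle: at the triggering instant the sampled input is refreshed, so the continuous-versus-sampled error $d(t)$ (which enters $V$ through $m$ and equals $w(0,t)$ up to sign) drops from its threshold value to $0$. One therefore has to check that each constituent of $V$ is continuous at $t_{j+1}$ --- $m(t)$ solves an ODE with bounded right-hand side, $X(t)$ is continuous, $-m(t)>0$ by Lemma~1 so $V\ge 0$ throughout, and $V_1,V_2$ depend on $w$ only through $L^2$-quantities of $u$ and $X$ that are unaffected by a single-point change of the boundary trace --- exactly as in the event-triggered PDE control literature that this work builds on. Beyond this, the remaining care is bookkeeping of constants: $M_j$ must be small enough both to force $g(M_j)\le\alpha^*/2$ and to keep the trajectory in $\Omega_1$ so that Lemmas~3 and~4 stay in force, and one must confirm that the $\xi_i$ (hence $\bar M$) from Lemma~3 are independent of the interval index, which they are.
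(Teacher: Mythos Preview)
Your proposal is correct and follows essentially the same route as the paper: set $M_j=\min\{M_0,\bar M\}$ with $\bar M$ chosen so that the superlinear terms in Lemma~3 are dominated by $\tfrac{\alpha^*}{2}V$, then integrate the resulting linear differential inequality over $(t_j,t_{j+1})$ and pass to the endpoint by continuity of $V$. Your invariance step and your discussion of left-continuity at $t_{j+1}$ are in fact more careful than the paper's own proof, which simply asserts continuity and takes its threshold as the positive root $p^*$ of $-\alpha^* V+\sum_i\xi_iV^{1+i/2}=0$ (your $\bar M$, defined by $g(\bar M)=\alpha^*/2$, is the sharper choice for obtaining $\dot V\le-\tfrac{\alpha^*}{2}V$).
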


\begin{figure}[t!]
\centering
\subfloat[{The axon length, $l(t)$ successfully converges to the desired length by $t=4.5$ mins for both event-triggered and continuous-time control law.}]{       \includegraphics[width=0.95\linewidth]{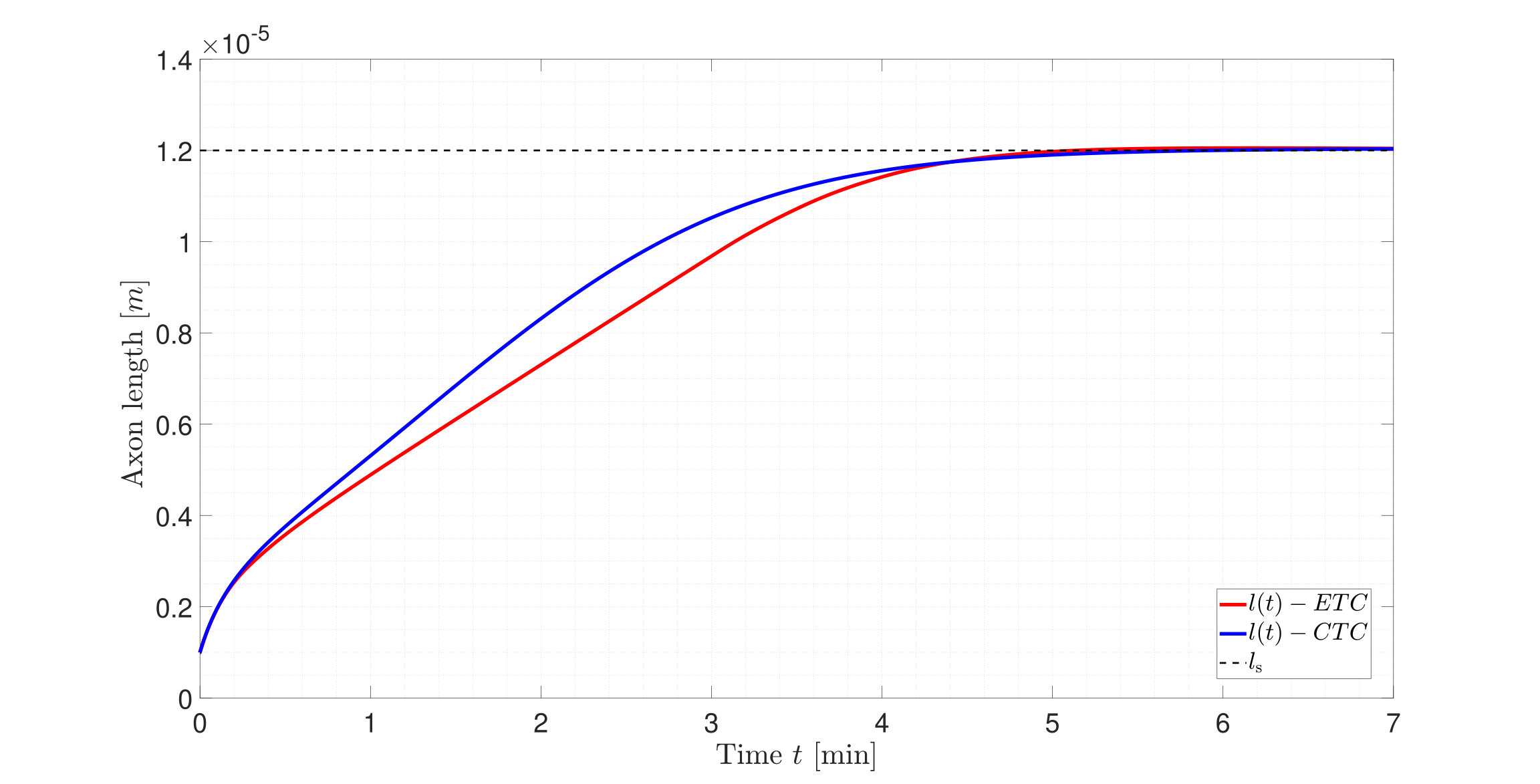}}
  \caption{The closed-loop response of the continuous-time and event-triggered control law for $l_s=12\mu m$ }
  \label{fig:2} 
\end{figure}

\vspace{-0.75em}

\begin{proof}

    {
\setlength{\abovedisplayskip}{1pt}
\setlength{\belowdisplayskip}{1pt}
\begin{table}[!b]
\hfill	\caption{\label{tab:initial}Biological constants and control parameters}
	\centering
	\begin{tabular}{c|c|c|c} 
		\hline
		Parameter  & Value  & Parameter & Value \\
		\hline
		$D$ & $10\times10^{-12}  m^2/s$& $\tilde{r}_{\rm g}$ & $0.053$\\
		$a$& $1\times 10^{-8}  m/s$ & $\gamma$ &  $10^4$\\
		$g$& $5\times 10^{-7} \ s^{-1}$ & $l_{\rm c}$ & $4\mu m$\\
		$r_{\rm g}$& $1.783\times 10^{-5} \ m^4/(mol s)$ & $l_s$ & $12\mu m$ \\
		$c_{\infty}$ &  $0.0119  \ mol/m^3$ & $l_0$& $1\mu m$ \\
 \hline
	\end{tabular}
\end{table}
}
For $M_j>0$, we easily demonstrate that $M_j<M_0$ using Lemma 4, ensuring the norm estimate from Lemma 3 holds. Thus, we set $M_j\leq p^*$, where $p^*$ is a non-zero root of the polynomial for $V>0$.
{
\setlength{\abovedisplayskip}{1pt}
\setlength{\belowdisplayskip}{1pt}
  \begin{align}
      -\alpha^* V +\xi_1 V^{3/2}+\xi_2 V^2+\xi_3V^{5/2}+\xi_4V^{3}=0 
      \label{eqn:poly}
  \end{align}}
Since $\alpha^*$, and $\xi_i$ are all positive, at least one positive root exists for the polynomial in \eqref{eqn:poly}. Therefore, \eqref{def-dotVtot3} implies 
  {
\setlength{\abovedisplayskip}{1pt}
\setlength{\belowdisplayskip}{1pt}
  \begin{align}
      \dot{V}(t)\leq -\frac{\alpha^*}{2}V(t)
  \end{align}}
for $t \in (t_j, t_{j+1})$, $j \in \mathbb{N}$ where $M_j=\min\left\{M_0,p^*\right\}$. The continuity of $V(t)$ in this interval implies $V(t_{j+1}^-)=V(t)$ and $V(t_{j}^+)=V(t_j)$  where $t_{j}^+$ and $t_{j}^-$ are right and left limits of $t=t_j$, respectively. Thus, we have
{
\setlength{\abovedisplayskip}{1pt}
\setlength{\belowdisplayskip}{1pt}
\begin{align}
    V(t_{j+1})\leq \exp(-\alpha^*(t_{j+1}-t_{j}))V(t_j)
\end{align}}
which completes the proof of Lemma 5.
\end{proof}

\vspace{-0.5em}

For any $t\geq 0$ in $t\in[t_{j},t_{j+1})$, $j\in\mathbb{N}$, we obtain
{
\setlength{\abovedisplayskip}{1pt}
\setlength{\belowdisplayskip}{1pt}
\begin{align}
    V(t)\leq e^{-\alpha^*(t-t_j)}V(t_j) 
    \leq e^{-\alpha^*t}V(0)
\end{align}}
Recalling $m(t)<0$ and \eqref{Vtotal}, we have
{
\setlength{\abovedisplayskip}{1pt}
\setlength{\belowdisplayskip}{1pt}
\begin{align}
    d_1 V_1(t) + V_2(t)  + d_2 V_3(t)\leq e^{-\alpha^*t}V(0).
\end{align}}

Utilizing the norm equivalence principle between the $(\varpi, X)$ system and the $(w, X)$ system, and leveraging the invertibility of the backstepping transformation, we establish the local exponential stability of $(u, X)$ in the $L_2$-norm.

\vspace{-0.5em}

\section{Numerical Simulations}

\vspace{-0.5em}

\begin{figure}[!t]
    \centering
    \subfloat[{Event-triggered control }\label{fig:1a}]{%
\includegraphics[width=0.95\linewidth]{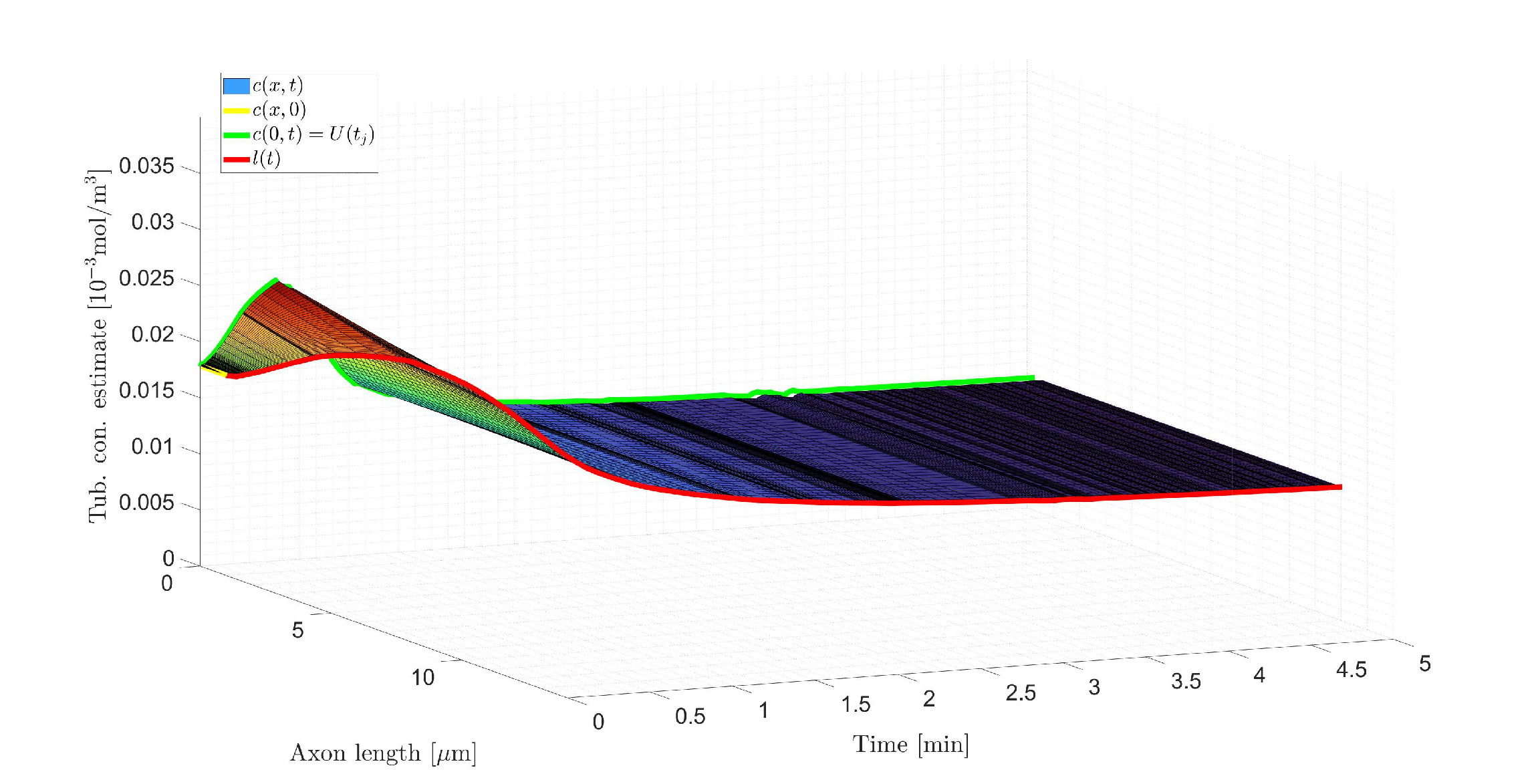}}
    \vfill
\subfloat[{Continuous-time control}\label{fig:1b}]{ \includegraphics[width=0.95\linewidth]{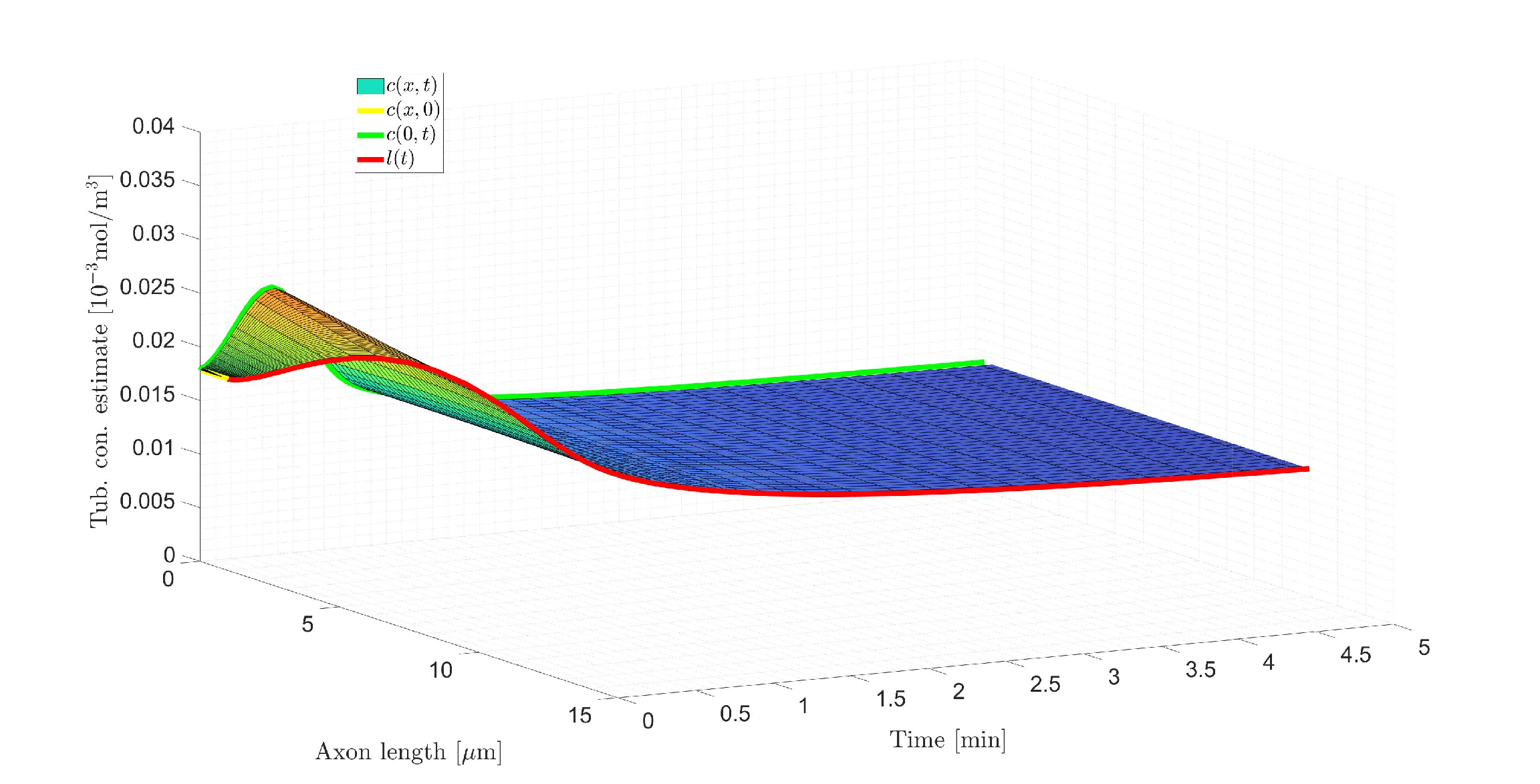}
}
    \caption{The closed-loop response of the designed full-state feedback control system for continuous-time and event-triggered control law. }
\end{figure}

In this section, we numerically analyze the plant dynamics \eqref{sys1}-\eqref{sys5} using the control law \eqref{real-input} and the event triggering mechanism as in Section \ref{section:event}. The model employs biological constants and control parameters from Table 1, with initial conditions set to $c_0(x)=1.5c_{\infty}$ for the tubulin concentration along the axon and $l_0=1\mu m$ for the initial axon length. The control gain parameters are chosen as $k_1=-0.001$ and $k_2=3\times 10^{13}$. The event-triggering mechanism parameters are set as follows: $m(0)=-0.5$, $\beta_1=4.0849\times 10^{8}$, $\beta_2=1.307\times10^{10}$, $\beta_3=1.642\times 10^{11}$, $\beta_4=6.536\times 10^{11}$, $\beta_5=7.35\times 10^{11}$, $\rho=4\times 10^{22}$, $\eta=800$ and $\sigma=0.5$.  In Fig. \ref{fig:1a} and \ref{fig:1b}, we present the evolution of tubulin concentration along the axon for both continuous-time control law and event-triggered control. Fig. 2 shows axon growth convergence under continuous-time and event-triggered control laws. Both methods achieve the desired $12\mu m$ length from an initial $1\mu m$ in about 4.5 minutes. 



\vspace{-0.5em}

\section{Conclusion}

\vspace{-0.5em}

This paper explores a dynamic event-triggering boundary control approach for axonal growth modeling, addressing Zeno behavior avoidance and providing a local stability analysis of the closed-loop system. Future research will focus on periodic event-triggering and self-triggering boundary control methods, better suited for digital implementations.

\vspace{-0.75em}

\bibliographystyle{IEEEtranS}
\bibliography{main.bib}

\begin{thebibliography}{10}
\providecommand{\url}[1]{#1}
\csname url@samestyle\endcsname
\providecommand{\newblock}{\relax}
\providecommand{\bibinfo}[2]{#2}
\providecommand{\BIBentrySTDinterwordspacing}{\spaceskip=0pt\relax}
\providecommand{\BIBentryALTinterwordstretchfactor}{4}
\providecommand{\BIBentryALTinterwordspacing}{\spaceskip=\fontdimen2\font plus
\BIBentryALTinterwordstretchfactor\fontdimen3\font minus \fontdimen4\font\relax}
\providecommand{\BIBforeignlanguage}[2]{{%
\expandafter\ifx\csname l@#1\endcsname\relax
\typeout{** WARNING: IEEEtranS.bst: No hyphenation pattern has been}%
\typeout{** loaded for the language `#1'. Using the pattern for}%
\typeout{** the default language instead.}%
\else
\language=\csname l@#1\endcsname
\fi
#2}}
\providecommand{\BIBdecl}{\relax}
\BIBdecl

\bibitem{aaarzen1999simple}
K.-E. {\AA}arz{\'e}n, ``A simple event-based pid controller,'' \emph{IFAC Proceedings Volumes}, vol.~32, no.~2, pp. 8687--8692, 1999.

\bibitem{bradbury2011manipulating}
E.~J. Bradbury and L.~M. Carter, ``Manipulating the glial scar: chondroitinase abc as a therapy for spinal cord injury,'' \emph{Brain research bulletin}, vol.~84, no. 4-5, pp. 306--316, 2011.

\bibitem{coron2013local}
J.-M. Coron, R.~Vazquez, M.~Krstic, and G.~Bastin, ``Local exponential h\^{}2 stabilization of a 2$\backslash$times2 quasilinear hyperbolic system using backstepping,'' \emph{SIAM Journal on Control and Optimization}, vol.~51, no.~3, pp. 2005--2035, 2013.

\bibitem{demir2021neuroncontrol}
C.~Demir, S.~Koga, and M.~Krstic, ``Neuron growth control by pde backstepping: Axon length regulation by tubulin flux actuation in soma,'' in \emph{2021 60th IEEE Conference on Decision and Control (CDC)}, 2021, pp. 649--654.

\bibitem{demir2022input}
------, ``Input delay compensation for neuron growth by pde backstepping,'' \emph{IFAC-PapersOnLine}, vol.~55, no.~36, pp. 49--54, 2022.

\bibitem{demir2022neuron}
------, ``Neuron growth output-feedback control by pde backstepping,'' in \emph{2022 American Control Conference (ACC)}.\hskip 1em plus 0.5em minus 0.4em\relax IEEE, 2022, pp. 4159--4164.

\bibitem{diehl2014one}
S.~Diehl, E.~Henningsson, A.~Heyden, and S.~Perna, ``A one-dimensional moving-boundary model for tubulin-driven axonal growth,'' \emph{Journal of theoretical biology}, vol. 358, pp. 194--207, 2014.

\bibitem{espitia2016event}
N.~Espitia, A.~Girard, N.~Marchand, and C.~Prieur, ``Event-based control of linear hyperbolic systems of conservation laws,'' \emph{Automatica}, vol.~70, pp. 275--287, 2016.

\bibitem{espitia2021event}
N.~Espitia, I.~Karafyllis, and M.~Krstic, ``Event-triggered boundary control of constant-parameter reaction--diffusion pdes: A small-gain approach,'' \emph{Automatica}, vol. 128, p. 109562, 2021.

\bibitem{Espitia2017}
N.~Espitia, A.~Girard, N.~Marchand, and C.~Prieur, ``Event-based boundary control of a linear $2\times 2$ hyperbolic system via backstepping approach,'' \emph{IEEE Transactions on Automatic Control}, vol.~63, no.~8, pp. 2686--2693, 2018.

\bibitem{girard2014dynamic}
A.~Girard, ``Dynamic triggering mechanisms for event-triggered control,'' \emph{IEEE Transactions on Automatic Control}, vol.~60, no.~7, pp. 1992--1997, 2014.

\bibitem{heemels2012introduction}
W.~P. Heemels, K.~H. Johansson, and P.~Tabuada, ``An introduction to event-triggered and self-triggered control,'' in \emph{2012 ieee 51st ieee conference on decision and control (cdc)}.\hskip 1em plus 0.5em minus 0.4em\relax IEEE, 2012, pp. 3270--3285.

\bibitem{heemels2008analysis}
W.~Heemels, J.~Sandee, and P.~Van Den~Bosch, ``Analysis of event-driven controllers for linear systems,'' \emph{International journal of control}, vol.~81, no.~4, pp. 571--590, 2008.

\bibitem{izhikevich2007dynamical}
E.~M. Izhikevich, \emph{Dynamical systems in neuroscience}.\hskip 1em plus 0.5em minus 0.4em\relax MIT press, 2007.

\bibitem{kandel2000principles}
E.~R. Kandel, J.~H. Schwartz, T.~M. Jessell, S.~Siegelbaum, A.~J. Hudspeth, and S.~Mack, \emph{Principles of neural science}.\hskip 1em plus 0.5em minus 0.4em\relax McGraw-hill New York, 2000, vol.~4.

\bibitem{karimi2010synergistic}
S.~Karimi-Abdolrezaee, E.~Eftekharpour, J.~Wang, D.~Schut, and M.~G. Fehlings, ``Synergistic effects of transplanted adult neural stem/progenitor cells, chondroitinase, and growth factors promote functional repair and plasticity of the chronically injured spinal cord,'' \emph{Journal of Neuroscience}, vol.~30, no.~5, pp. 1657--1676, 2010.

\bibitem{kofman2006level}
E.~Kofman and J.~H. Braslavsky, ``Level crossing sampling in feedback stabilization under data-rate constraints,'' in \emph{Proceedings of the 45th IEEE Conference on Decision and Control}.\hskip 1em plus 0.5em minus 0.4em\relax IEEE, 2006, pp. 4423--4428.

\bibitem{koga2023event}
S.~Koga, C.~Demir, and M.~Krstic, ``Event-triggered safe stabilizing boundary control for the stefan pde system with actuator dynamics,'' in \emph{2023 American Control Conference (ACC)}.\hskip 1em plus 0.5em minus 0.4em\relax IEEE, 2023, pp. 1794--1799.

\bibitem{krstic2020materials}
S.~Koga and M.~Krstic, \emph{Materials Phase Change PDE Control and Estimation: From Additive Manufacturing to Polar Ice}.\hskip 1em plus 0.5em minus 0.4em\relax Springer Nature, 2020.

\bibitem{krstic09}
M.~Krstic, ``Compensating actuator and sensor dynamics governed by diffusion {PDE}s,'' \emph{Systems \& Control Letters}, vol.~58, no.~5, pp. 372--377, 2009.

\bibitem{krstic2008boundary}
M.~Krstic and A.~Smyshlyaev, \emph{Boundary control of PDEs: A course on backstepping designs}.\hskip 1em plus 0.5em minus 0.4em\relax SIAM, 2008.

\bibitem{lee2010sustained}
H.~Lee, R.~J. McKeon, and R.~V. Bellamkonda, ``Sustained delivery of thermostabilized chabc enhances axonal sprouting and functional recovery after spinal cord injury,'' \emph{Proceedings of the National Academy of Sciences}, vol. 107, no.~8, pp. 3340--3345, 2010.

\bibitem{liu1997neuronal}
X.~Z. Liu, X.~M. Xu, R.~Hu, C.~Du, S.~X. Zhang, J.~W. McDonald, H.~X. Dong, Y.~J. Wu, G.~S. Fan, M.~F. Jacquin \emph{et~al.}, ``Neuronal and glial apoptosis after traumatic spinal cord injury,'' \emph{Journal of Neuroscience}, vol.~17, no.~14, pp. 5395--5406, 1997.

\bibitem{maccioni2001molecular}
R.~B. Maccioni, J.~P. Mu{\~n}oz, and L.~Barbeito, ``The molecular bases of alzheimer's disease and other neurodegenerative disorders,'' \emph{Archives of medical research}, vol.~32, no.~5, pp. 367--381, 2001.

\bibitem{mclean2004continuum}
D.~R. McLean, A.~van Ooyen, and B.~P. Graham, ``Continuum model for tubulin-driven neurite elongation,'' \emph{Neurocomp.}, vol.~58, pp. 511--516, 2004.

\bibitem{postoyan2011unifying}
R.~Postoyan, A.~Anta, D.~Ne{\v{s}}i{\'c}, and P.~Tabuada, ``A unifying lyapunov-based framework for the event-triggered control of nonlinear systems,'' in \emph{2011 50th IEEE conference on decision and control and European control conference}.\hskip 1em plus 0.5em minus 0.4em\relax IEEE, 2011, pp. 2559--2564.

\bibitem{rathnayake2022event2}
B.~Rathnayake and M.~Diagne, ``Event-based boundary control of one-phase stefan problem: A static triggering approach,'' in \emph{2022 American Control Conference (ACC)}.\hskip 1em plus 0.5em minus 0.4em\relax IEEE, 2022, pp. 2403--2408.

\bibitem{rathnayake2022event}
------, ``Event-based boundary control of the stefan problem: A dynamic triggering approach,'' in \emph{2022 IEEE 61st Conference on Decision and Control (CDC)}.\hskip 1em plus 0.5em minus 0.4em\relax IEEE, 2022, pp. 415--420.

\bibitem{rathnayake2021observer}
B.~Rathnayake, M.~Diagne, N.~Espitia, and I.~Karafyllis, ``Observer-based event-triggered boundary control of a class of reaction--diffusion pdes,'' \emph{IEEE Transactions on Automatic Control}, vol.~67, no.~6, pp. 2905--2917, 2021.

\bibitem{susto2010control}
G.~A. Susto and M.~Krstic, ``Control of pde--ode cascades with neumann interconnections,'' \emph{Journal of the Franklin Institute}, vol. 347, no.~1, pp. 284--314, 2010.

\bibitem{tang2011state}
S.~Tang and C.~Xie, ``State and output feedback boundary control for a coupled pde--ode system,'' \emph{Syst. Contr. Lett.}, vol.~60, no.~8, pp. 540--545, 2011.

\end{thebibliography}


\end{document}